\numberwithin{equation}{section}
\providecommand{\U}[1]{\protect\rule{.1in}{.1in}}
\providecommand{\U}[1]{\protect \rule{.1in}{.1in}}
\newtheorem{theorem}{Theorem}[section]
\newtheorem{corollary}[theorem]{Corollary}
\newtheorem{definition}[theorem]{Definition}
\newtheorem{example}[theorem]{Example}
\newtheorem{lemma}[theorem]{Lemma}
\newtheorem{proposition}[theorem]{Proposition}
\newtheorem{remark}[theorem]{Remark}
\newtheorem{assumption}[theorem]{Assumption}
\newenvironment{proof}[1][Proof]{\noindent \textbf{#1.} }{\  \rule{0.5em}{0.5em}}
\def \E{\mathsf{E}}
\def \P{\mathsf{P}}
\def \Q{\mathsf{Q}}
\def\d{\mathrm{d}}
\def \E{\mathsf{E}}
\def \P{\mathsf{P}}
\begin{document}
	\title{Backward Stochastic Differential Equations with Nonlinear Expectation Reflection}
	\author{ 	Hanwu Li\thanks{Research Center for Mathematics and Interdisciplinary Sciences, Shandong University, Qingdao 266237, Shandong, China. lihanwu@sdu.edu.cn.}
	\thanks{Frontiers Science Center for Nonlinear Expectations (Ministry of Education), Shandong University, Qingdao 266237, Shandong, China.}}
	\date{}
	\maketitle
	
	\begin{abstract}
        In this paper, we study a kind of constrained backward stochastic differential equations (BSDEs) such that the nonlinear expectation of the composition of a loss function and the solution remains above zero. The existence and uniqueness result is established with the help of the Skorokhod problem and the method of contraction mapping. We provide the  comparison properties for the pointwise value of the solutions and the expectation of the solutions, respectively. In addition, a similar BSDE with risk measure reflection is proposed, which can be applied to the superhedging for contingent claims under risk management constraints.
	\end{abstract}

    \textbf{Key words}: backward stochastic differential equations, nonlinear expectation reflection, comparison theorem, risk measure reflection

    \textbf{MSC-classification}: 60H10
	
\section{Introduction}

El Karoui et al. \cite{EKPPQ} first introduced the notion of reflected backward stochastic differential equations (reflected BSDEs) whose dynamics evolve according to the following equation
\begin{align*}
    Y_t=\xi+\int_t^T f(s,Y_s,Z_s)ds-\int_t^T Z_s dB_s+(K_T-K_t),
\end{align*}
where $K$ is a nondecreasing process aiming to push the solution upwards such that for a given lower obstacle $L$, we have $Y_t\geq L_t$ for any $t\in[0,T]$. 
It is natural to find the minimal solution satisfying the above constraint, which can be determined by the following Skorokhod condition
\begin{align*}
    \int_0^T(Y_t-L_t)dK_t=0.
\end{align*}
Due to the importance in theoretical analysis and in applications, the reflected BSDEs have attracted a great deal of attention. To name a few, Cvitani\'{c} and Karatzas \cite{CK} investigated doubly reflected BSDEs, where the solution is forced to stay between a lower obstacle and an upper obstacle. The doubly reflected BSDEs are closely related to the Dynkin games. Peng and Xu \cite{PX} considered a more general case in which constraints may also depend on the second component of the solution, which can be used to solve the problem of superhedging with restricted admissible portfolios. To study the optimal switching problems, Hu and Tang \cite{HT} investigated multidimensional BSDEs with oblique reflections. When the nondecreasing process behaves in a nonlinear way, we may refer to Qian and Xu \cite{QX} for the reflected BSDEs with resistence, which can be applied to the superhedging problem with wealth constraint. 


It should be pointed out that the constraints in all the papers mentioned above are made pointwisely for the solution. Recently, Briand, Elie and Hu \cite{BEH} proposed a new kind of BSDE with weak constraints which takes the following form 
\begin{align*}
\E[l(t,Y_t)]\geq 0, \ t\in[0,T].
\end{align*}
In order to obtain the existence and uniqueness of solutions to the so-called mean reflected BSDEs, the process $K$ is restricted to be a deterministic function satisfying the Skorokhod condition. One of the application of the mean reflected BSDEs is the superhedging of claims under running risk management constraint, where the risk measure $\{\rho(t,\cdot)\}_{t\in[0,T]}$ is Lipschitz continuous with respect to the $L^1$-norm. 
Since then, many researchers have put considerable effort to the study of the mean reflected BSDEs. Briand and Hibon \cite{BH} established the propagation of chaos for mean reflected BSDEs using the interacting particle systems. Falkowski and Slomi\'{n}ski \cite{FS} considered the mean reflected BSDEs with two constraints, which is generalized by Li \cite{Li2024} to the case of different loss functions. Recently, Qu and Wang \cite{QW} investigated the multi-dimensional case with possibly non-convex reflection domains along inward normal direction. The readers may refer to \cite{CHM,HL,HHLLW,HMW,LN,LW} and the references therein for more aspects related with mean reflected BSDEs.

It should be emphasized that  that the BSDE with risk measure reflection in \cite{BEH} needs the Lipschitz assumption for the risk measure. However, this assumption may not always holds true (see Remark \ref{remark4.5} below). In order to overcome this problem, in the present paper, we propose the constraint by means of an appropriate nonlinear expectation $\mathcal{E}$. More precisely, we consider the following type of BSDE with nonlinear expectation reflection
\begin{equation*}
\begin{cases}
Y_t=\xi+\int_t^T f(s,Y_s,Z_s)ds-\int_t^T Z_s dB_s+K_T-K_t, \\
\mathcal{E}[l(t,Y_t)]\geq 0, \\
K\in I[0,T] \textrm{ and }\int_0^T \mathcal{E}[l(t,Y_t)]dK_t=0.
\end{cases}
\end{equation*}
As usual, the Skorokhod condition ensures the minimality of the solution, which is shown in Proposition \ref{prop3.9} for some specific driver $f$. Here, the nonlinear expectation is assumed to be dominated by a $g$-expectation with generator $g(t,z)=\kappa(|y|+|z|)$, denoted by $G^\kappa_{0,T}$ (see \cite{CP,P97} for more details about $g$-expectation). It should be pointed out that the building block for the construction of the classical mean reflected BSDEs is a nonlinear operator $L_t(X)$, which may be interpreted as the minimal strength with which the random variable $X$ can be pushed upward in order to fulfill the constraint at time $t$. Furthermore, in most of the above mentioned papers concerning mean reflected BSDEs, this operator is required to satisfy the following Lipschitz assumption, i.e., there exists a positive constant $C$, such that for any $t\in[t,T]$, we have
\begin{align*}
    |L_t(X)-L_t(Y)|\leq C\E[|X-Y|].
\end{align*}
In the present setting, although the operator $L_t(X)$ associated with the nonlinear expectation reflection may not satisfy the Lipschitz assumption, it inherits the domination property of the nonlinear expectation (see Proposition \ref{lip}). Fortunately, by the a priori estimates for BSDEs and the representation of the third component $K$, we may construct the contraction mapping and then obtain the existence and uniqueness result. Another interesting problem is how the terminal value $\xi$, the loss function $l$ and the nonlinear expectation $\mathcal{E}$ affect the solution. It is worth mentioning that since the constraint is given in nonlinear expectation instead of pointwisely, the comparison theorem may not hold in general.  We answer this question by establishing the comparison properties for some typical cases. By the motivation of the present paper, we introduce the BSDE with risk measure reflection, where the risk measure may not satisfy the Lipschitz assumption as required in \cite{BEH}. The solution to this kind of reflected BSDE coincides with the superhedging price for a contingent claim under running risk management constraint.

This paper is organized as follows. In Section 2, we first formulate the BSDEs with nonlinear expectation reflection in details. Then, we establish the existence and uniqueness result and some properties of BSDEs with nonlinear expectation reflection including the comparison theorem and the representation property. In Section 4, we introduce the BSDEs with risk measure reflection and their financial applications.

\section{$g$-expectation and related properties}

Let $T>0$ be a finite time horizon. Consider a filtered probability space $(\Omega,\mathcal{F},\mathbb{F},\P)$ satisfying the usual conditions of right continuity and completeness in which $B$ is a standard Brownian motion. For simplicity, we only consider the case of one-dimensional Brownian motion and the results still holds for the multi-dimensional case. The following notations are frequently used in this paper.

\begin{itemize}
\item $L^2(\mathcal{F}_t)$: the set of real-valued $\mathcal{F}_t$-measurable random variable $\xi$ such that $\E[|\xi|^2]<\infty$, $t\in[0,T]$.
\item $\mathcal{S}^2_{[s,t]}$: the set of real-valued adapted continuous processes $Y$ on $[s,t]$ such that $$\E\left[\sup_{r\in[s,t]}|Y_r|^2\right]<\infty.$$
\item $\mathcal{H}^2_{[s,t]}$: the set of real-valued predictable processes $Z$ such that $\E\left[\int_s^t|Z_r|^2dr\right]<\infty$.
\item $C[s,t]$: the set of continuous functions from $[s,t]$ to $\mathbb{R}$.
\item $I[s,t]$: the set of functions in $C[s,t]$ starting from the origin which is nondecreasing.
\end{itemize}
If $[s,t]=[0,T]$, $\mathcal{S}^2_{[s,t]}$ and $\mathcal{H}^2_{[s,t]}$ are denoted by $\mathcal{S}^2$ and $\mathcal{H}^2$, respectively.

For each given $X \in L^2\left(\mathcal{F}_T\right)$, consider the following backward stochastic differential equation (BSDE):
\begin{equation}\label{BSDE}
y^X(t)  =X+\int_t^T g\left(s, y^X_s, z^X_s\right) ds-\int_t^T z^X_s d B_s, 
\end{equation}
 where $g:\Omega\times[0,T]\times \mathbb{R}\rightarrow\mathbb{R}$ satisfies the following conditions:
   \begin{itemize}
       \item[(A1)] For each fixed $y,z$, $g(\cdot,\cdot,y,z)$ is progressively measurable.
       \item[(A2)]  There exists $\kappa>0$ such that for any $t\in[0,T]$ and any $y,y'\in\mathbb{R}$, $z,z'\in\mathbb{R}$
\begin{align*}
|g(t,y,z)-g(t,y,z')|\leq \kappa( |y-y'|+|z-z'|).
\end{align*}
\item[(A3)] For any $t\in[0,T]$, $g(t,0,0)=0$.
   \end{itemize}
   By the result in \cite{PP90}, the above BSDE admits a unique pair of solution $\left(y^X, z^X\right) \in \mathcal{S}^2\times \mathcal{H}^2$. 

\begin{definition}[g-expectation] The g-expectation $G_{0,T}(\cdot): L^2(\mathcal{F}_T) \longmapsto \mathbb{R}$ is defined by (see Definition 1.1 in \cite{CP}) 
$$
G_{0,T}(X):=y^X_0.
$$
Suppose that $g=\kappa(|y|+|z|)$ (resp., $g=-\kappa(|y|+|z|)$),  we write $G^{\kappa}_{0,T}(\cdot)$ (resp., $G^{-\kappa}_{0,T}(\cdot)$) instead of $G_{0,T}(\cdot)$.
\end{definition}

Under Assumptions (A1)-(A3), the $g$-expectation of the random variable $X$ also depends on the terminal time $T$ and the constant preserving property is not valid. Fortunately, some nice properties such as monotonicity, continuity still hold.

\begin{proposition}[\cite{CP}]\label{g-exp}
    Under Assumptions (A1)-(A3), the $g$-expectation has the following properties
    \begin{itemize}
        \item[(1)] If $X\leq Y$, then $G_{0,T}[X]\leq G_{0,T}[Y]$. Furthermore, if $\P(X<Y)>0$, then $G_{0,T}[X]<G_{0,T}[Y]$.
        \item[(2)] There is a constant $C_{\kappa,T}$ depending only on $\kappa, K$, such that $|G_{0,T}[X]|^2\leq C_{\kappa,T}\E[|X|^2]$.
        \item[(3)] $G_{0,T}^{-\kappa}[X-Y]\leq G_{0,T}[X]-G_{0,T}[Y]\leq G_{0,T}^{\kappa}[X-Y]$.
        \item[(4)] $G_{0,T}^\kappa[X]=-G_{0,T}^{-\kappa}[-X]$.
         \item[(5)] Let $C>0$ be a constant. We have $G_{0,T}^\kappa[CX]=CG^{\kappa}_{0,T}[X]$ and $G^{-\kappa}_{0,T}[CX]=CG_{0,T}^{-\kappa}[X]$.
    \end{itemize}
\end{proposition}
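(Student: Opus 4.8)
The plan is to deduce all five properties from three standard facts about Lipschitz BSDEs under (A1)--(A3): the existence and uniqueness theorem already quoted from \cite{PP90}, the comparison theorem in both its non-strict and strict forms, and the classical a priori $L^2$-estimate. Properties (4) and (5) are the quickest, since they follow by applying a linear transformation to the solution of \eqref{BSDE} and invoking uniqueness. For (4), I would let $(\bar y,\bar z)$ solve \eqref{BSDE} with terminal value $X$ and generator $\kappa(|y|+|z|)$, so that $\bar y_0=G^\kappa_{0,T}[X]$. Writing out the equation satisfied by $(-\bar y,-\bar z)$ and using $-\kappa(|\bar y_s|+|\bar z_s|)=-\kappa(|-\bar y_s|+|-\bar z_s|)$, one checks that $(-\bar y,-\bar z)$ solves the BSDE with terminal value $-X$ and generator $-\kappa(|y|+|z|)$; uniqueness then gives $-\bar y_0=G^{-\kappa}_{0,T}[-X]$, which is (4). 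The argument for (5) is identical in spirit: for $C>0$ the process $(C\bar y,C\bar z)$ solves the BSDE with terminal value $CX$ and the \emph{same} generator, precisely because positivity of $C$ yields $C\kappa(|\bar y_s|+|\bar z_s|)=\kappa(|C\bar y_s|+|C\bar z_s|)$, and uniqueness closes the argument. The positivity of $C$ is essential here and is the only subtlety in (4)--(5).

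For (1), monotonicity is an immediate application of the comparison theorem to two copies of \eqref{BSDE} sharing the generator $g$ but with terminal values $X\le Y$; the strict inequality when $\P(X<Y)>0$ follows from the strict version of the comparison theorem. Property (2) is the standard a priori estimate: I would apply It\^o's formula to $|y^X_t|^2$, use (A2)--(A3) to bound $|g(s,y^X_s,z^X_s)|\le\kappa(|y^X_s|+|z^X_s|)$, absorb the martingale and the $|z^X|^2$ term via Young's inequality, and then invoke Gronwall's lemma; evaluating at $t=0$ yields $|G_{0,T}[X]|^2=|y^X_0|^2\le C_{\kappa,T}\E[|X|^2]$ with $C_{\kappa,T}$ depending only on $\kappa$ and $T$.

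The main work is (3), and this is where I expect the only real obstacle. Writing $\delta y=y^X-y^Y$ and $\delta z=z^X-z^Y$, the pair $(\delta y,\delta z)$ solves a BSDE with terminal value $X-Y$ and generator $\tilde g(s):=g(s,y^X_s,z^X_s)-g(s,y^Y_s,z^Y_s)$. The Lipschitz condition (A2) gives the two-sided bound $-\kappa(|\delta y_s|+|\delta z_s|)\le\tilde g(s)\le\kappa(|\delta y_s|+|\delta z_s|)$, and the crucial observation is that the two bounding expressions are exactly the generators $\pm\kappa(|y|+|z|)$ evaluated along $(\delta y,\delta z)$. Hence the right-hand inequality reads $\tilde g(s)\le\kappa(|\delta y_s|+|\delta z_s|)$ along the solution $(\delta y,\delta z)$, so the comparison theorem yields $\delta y_t\le\bar y_t$, where $\bar y$ is the solution corresponding to $G^\kappa_{0,T}[X-Y]$; symmetrically, the left-hand inequality gives $\underline y_t\le\delta y_t$ with $\underline y$ corresponding to $G^{-\kappa}_{0,T}[X-Y]$. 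Evaluating at $t=0$ produces the sandwich in (3). The point requiring care is to verify the generator inequality along the correct solution so that the comparison theorem applies directly, rather than along the comparison solution where the bound is not available.
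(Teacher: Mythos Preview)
The paper does not supply its own proof of this proposition: it is quoted directly from \cite{CP} and used as a black box. Your proposal is a correct reconstruction of the standard arguments behind these facts, and it matches the approach one finds in the cited reference. In particular, your handling of (3) is the right one: viewing $\tilde g(s)=g(s,y^X_s,z^X_s)-g(s,y^Y_s,z^Y_s)$ as a $(y,z)$-independent driver and comparing it with $\pm\kappa(|y|+|z|)$ along the difference solution $(\delta y,\delta z)$ is exactly the form of the comparison theorem that applies, and your remark about which solution the driver inequality must hold along is on point. Properties (4) and (5) via uniqueness after a linear change of unknown, (1) via (strict) comparison, and (2) via It\^o plus Gronwall are all standard and correctly sketched.
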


\begin{remark}\label{g-exp'}
(i) In fact, the notion of $g$-expectation was initially introduced in \cite{P97} under (A1), (A2) and a slightly stronger condition  given as follows
\begin{itemize}
\item[(A3')] For any $t\in[0,T]$ and $y\in\mathbb{R}$, $g(t,y,0)=0$.
\end{itemize}
In this case, the $g$-expectation does not depend on the terminal time and it satisfies the constant preserving property.  If $g=\kappa|z|$ (resp., $g=-\kappa|z|$),  we write $\mathcal{E}^{\kappa}(\cdot)$ (resp., $\mathcal{E}^{-\kappa}(\cdot)$) instead of $G_{0,T}(\cdot)$.

\noindent (ii) Let $C>0$ be a constant, consider the BSDE \eqref{BSDE} with terminal value $C$ and driver $g=-\kappa(|y|+|z|)$. It is easy to check that $(C e^{-\kappa(T-t)},0)_{t\in[0,T]}$ is its solution.  That is to say, although the $g$-expectation does not satisfy the constant preserving property, we may calculate that for any constant $C>0$,
    \begin{align*}
        G_{0,T}^{-\kappa}[C]=C e^{-\kappa T}.
    \end{align*}
    A direct consequence of Proposition \ref{g-exp} is that 
    \begin{align*}
        G_{0,T}^{\kappa}[-C]=-C e^{-\kappa T}.
    \end{align*}
\end{remark}

\section{BSDE with nonlinear expectation reflection}
\subsection{Problem formulation}

The objective of this paper is to study the BSDE with nonlinear expectation reflection whose parameters consist of the terminal value $\xi$, the driver (or coefficient) $f$, the loss functions $l$ and the nonlinear expectation $\mathcal{E}$, which is given as follows
\begin{equation}\label{nonlinearyz}
\begin{cases}
Y_t=\xi+\int_t^T f(s,Y_s,Z_s)ds-\int_t^T Z_s dB_s+K_T-K_t, \\
\mathcal{E}[l(t,Y_t)]\geq 0, \\
K\in I[0,T] \textrm{ and }\int_0^T \mathcal{E}[l(t,Y_t)]dK_t=0.
\end{cases}
\end{equation}
We propose the following assumptions on the parameters $f,l,\mathcal{E}$.
 
\begin{assumption}\label{assf}
The driver $f$ is a map from $\Omega\times[0,T]\times \mathbb{R}\times\mathbb{R}$ to $\mathbb{R}$. For 
each fixed $(y,z)$, $f(\cdot,\cdot,y,z)$ is progressively measurable. There exists $\lambda>0$ such that for any $t\in[0,T]$ and any $y,y',z,z'\in\mathbb{R}$
\begin{align*}
|f(t,y,z)-f(t,y',z')|\leq \lambda(|y-y'|+|z-z'|)
\end{align*}
and 
\begin{align*}
\E\left[\int_0^T |f(t,0,0)|^2dt\right]<\infty.
\end{align*}
\end{assumption}

\begin{assumption}\label{ass2}
The function $l:\Omega\times [0,T]\times\mathbb{R}\rightarrow \mathbb{R}$ is a measurable map with respect to $\mathcal{F}_T\times \mathcal{B}([0,T])\times \mathcal{\mathbb{R}}$ satisfying the following conditions.
\begin{itemize}
\item[(1)] For any fixed $(\omega,x)\in \Omega\times\mathbb{R}$, $l(\omega,\cdot,x)$ is continuous.
\item[(2)] $\E\left[\sup_{t\in[0,T]}|l(t,0)|^2\right]<\infty$.
\item[(3)] For any fixed $(\omega,t)\in \Omega\times [0,T]$, $l(\omega,t,\cdot)$ is strictly increasing and there exists two constants $0<c<C$ such that for any $x,y\in \mathbb{R}$,
\begin{align*}
c_l|x-y|\leq |l(\omega,t,x)-l(\omega,t,y)|\leq C_l|x-y|.
\end{align*}
\end{itemize}
\end{assumption}

\begin{assumption}\label{assE}
    The nonlinear expectation $\mathcal{E}:L^2(\mathcal{F}_T)\rightarrow \mathbb{R}$ satisfies the following conditions.
    \begin{itemize}
        \item[(1)] For $X\leq Y$, we have $\mathcal{E}[X]\leq \mathcal{E}[Y]$. 
        \item[(2)] There exists a constant $M>0$, such that for any $X_1,X_2\in L^2(\mathcal{F}_T)$,
        \begin{align*}
            \mathcal{E}[X_1]-\mathcal{E}[X_2]\leq MG^{\kappa}_{0,T}[X_1-X_2].
        \end{align*}
    \end{itemize}
\end{assumption}

\begin{example}\label{egofnonlinear expectation}
 (i) Clearly, the classical expectation $\E[\cdot]$ and the $g$-expectation $G_{0,T}(\cdot)$ satisfies Assumption \ref{assE}. In Remark \ref{remark4.5} below, we introduce some risk measures satisfying Assumption \ref{assE} (2).
 
\noindent (ii) Suppose $\mathcal{P}$ is a collection of probability measures $\P^{\theta}$ with Girsanov kernel $\theta\in \Theta$, where
    \begin{align*}
        \frac{d\P^{\theta}}{d\P}=\exp\left(\int_0^T \theta_s dB_s-\frac{1}{2}\int_0^T \theta_s^2 ds\right).
    \end{align*}
    Let $\Theta_\kappa$ be the collection of all progressively measurable processes $\theta$ such that $|\theta|\leq \kappa$. Assume that $\Theta\subset \Theta_\kappa$. Consider the  $\alpha$-maxmin expectation $\mathcal{E}_{\alpha,\mathcal{P}}[\cdot]$ with $\alpha\in[0,1]$ (see \cite{BLR,GMM}), i.e.,
    \begin{align*}
        \mathcal{E}_{\alpha,\mathcal{P}}[\xi]:=\alpha \sup_{\theta\in \Theta}\E^{\P^\theta}[\xi]+(1-\alpha) \inf_{\theta\in \Theta}\E^{\P^\theta}[\xi],
    \end{align*}
    Then, the nonlinear expectation $\mathcal{E}_{\alpha,\mathcal{P}}[\cdot]$ satisfies Assumption \ref{assE}. In fact, we have
    \begin{align*}
        \mathcal{E}_{\alpha,\mathcal{P}}[X]-\mathcal{E}_{\alpha,\mathcal{P}}[Y]\leq \sup_{\theta\in \Theta}\E^{\P^{\theta}}[Y-X]\leq \sup_{\theta\in \Theta_\kappa}\E^{\P^{\theta}}[Y-X]=\mathcal{E}^{\kappa}[Y-X]\leq G^\kappa_{0,T}[Y-X].
    \end{align*}
\end{example}

\begin{remark}\label{remarkE}


   \noindent (i) By Proposition \ref{g-exp}, for any $X_1,X_2\in L^2(\mathcal{F}_T)$, we have 
   \begin{align*}
       \mathcal{E}[X_1]-\mathcal{E}[X_2]\leq -MG^{-\kappa}_{0,T}[X_2-X_1].
   \end{align*}
   Consequently, we have
        \begin{align}\label{dominate}
           MG^{-\kappa}_{0,T}[X_1-X_2] \leq \mathcal{E}[X_1]-\mathcal{E}[X_2]\leq MG^{\kappa}_{0,T}[X_1-X_2].
        \end{align}
        and 
        \begin{align}\label{dominate'}
           |\mathcal{E}[X_1]-\mathcal{E}[X_2]|\leq MG^{\kappa}_{0,T}[|X_1-X_2|].
        \end{align}

 \noindent (ii) Suppose that $Y-X\geq C$, a.s., where $C>0$ is a constant. Then, we have
 \begin{align*}
     \mathcal{E}[Y]-\mathcal{E}[X]\geq M G^{-\kappa}_{0,T}[Y-X]\geq MG^{-\kappa}_{0,T}[C]= MC e^{-\kappa T},
 \end{align*}
 where we have used Remark \ref{g-exp'}. In this case, the strict comparison property holds, i.e., $\mathcal{E}[Y]>\mathcal{E}[X]$.

   \noindent (iii) Actually, Liu and Wang \cite{LW} and He and Li \cite{HL} also considered the BSDE with nonlinear expectation reflection within the $G$-expectation framework. However, the results cannot cover those in the present paper. 

    \noindent (iv) Without loss of generality, in the following of this paper, we assume that $M=1$. 

    \noindent (v) If the nonlinear expectation degenerates into the classical expectation, the BSDE with nonlinear expectation reflection \eqref{nonlinearyz} turns into the mean reflected BSDE studied in \cite{BEH}.
\end{remark}

\subsection{Existence and uniqueness result}

Motivated by the construction for the solution to the mean reflected BSDE in \cite{BEH}, we define the operator $L_t$ is follows
\begin{align}\label{operator}
L_t: L^2(\mathcal{F}_T)\rightarrow [0,\infty), \ X\mapsto \inf\{x\geq 0:\mathcal{E}[l(t,x+X)]\geq 0\}.
\end{align}
Before investigating the properties of this operator, we first show that it is well-defined, which is a direct consequence of the following lemma.

\begin{lemma}\label{lemma1}
    Suppose that Assumptions \ref{ass2} and \ref{assE} holds. For any fixed $t\in[0,T]$ and $X\in L^2(\mathcal{F}_T)$, define $h(x):=\mathcal{E}[l(t,x+X)]$. Then, the inverse map $h^{-1}$ exists. 
\end{lemma}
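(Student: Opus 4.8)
The plan is to show that $h:\mathbb{R}\to\mathbb{R}$ is a continuous, strictly increasing bijection onto all of $\mathbb{R}$; once this is established, the existence of the (continuous, strictly increasing) inverse $h^{-1}$ follows from the intermediate value theorem together with strict monotonicity. Three ingredients are needed: a quantitative strict monotonicity, Lipschitz continuity, and the two limits $h(x)\to\pm\infty$ as $x\to\pm\infty$. All three will be read off from the domination estimates \eqref{dominate}--\eqref{dominate'}, combined with the two-sided Lipschitz control of $l$ in Assumption \ref{ass2}(3).

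First I would prove quantitative strict monotonicity. Fix $x_1<x_2$. Since $l(\omega,t,\cdot)$ is strictly increasing with $l(\omega,t,x)-l(\omega,t,y)\ge c_l(x-y)$ for $x>y$, we obtain the almost sure bound $l(t,x_2+X)-l(t,x_1+X)\ge c_l(x_2-x_1)>0$, i.e. the difference dominates the positive constant $C:=c_l(x_2-x_1)$. Applying the left inequality in \eqref{dominate} (with $M=1$) and the monotonicity of $G^{-\kappa}_{0,T}$ then gives
\begin{align*}
h(x_2)-h(x_1)=\mathcal{E}[l(t,x_2+X)]-\mathcal{E}[l(t,x_1+X)]\ge G^{-\kappa}_{0,T}[C]=c_l(x_2-x_1)e^{-\kappa T},
\end{align*}
where the last equality uses the explicit value $G^{-\kappa}_{0,T}[C]=Ce^{-\kappa T}$ computed in Remark \ref{g-exp'}. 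In particular $h$ is strictly increasing, with a linear lower bound on its increments.

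Next I would establish continuity. Using \eqref{dominate'}, the upper Lipschitz bound $|l(t,x_2+X)-l(t,x_1+X)|\le C_l|x_2-x_1|$, and the monotonicity of $G^{\kappa}_{0,T}$ together with the analogous computation $G^{\kappa}_{0,T}[a]=ae^{\kappa T}$ for a positive constant $a$, I get
\begin{align*}
|h(x_2)-h(x_1)|\le G^{\kappa}_{0,T}\big[|l(t,x_2+X)-l(t,x_1+X)|\big]\le C_l|x_2-x_1|e^{\kappa T},
\end{align*}
so that $h$ is in fact Lipschitz continuous. Finally, the increment bound from the first step yields $h(x)\ge h(0)+c_l e^{-\kappa T}x$ for $x>0$ and $h(x)\le h(0)-c_l e^{-\kappa T}|x|$ for $x<0$, whence $h(x)\to+\infty$ as $x\to+\infty$ and $h(x)\to-\infty$ as $x\to-\infty$. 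Being continuous, strictly increasing and surjective onto $\mathbb{R}$, the map $h$ is a bijection, and its inverse $h^{-1}$ exists.

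I expect the only genuinely delicate point to be the quantitative strict monotonicity: since $\mathcal{E}$ need not satisfy the constant-preserving property, a mere strict increase of $l$ does not immediately transfer to strictness of $h$. The key is to convert the strict increase of $l$ into an almost sure constant gap (via the lower Lipschitz constant $c_l$) and then invoke the strict comparison already recorded in Remark \ref{remarkE}(ii); this simultaneously furnishes the linear lower bound that drives the surjectivity argument.
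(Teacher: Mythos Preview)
Your proof is correct and follows essentially the same approach as the paper: both establish strict monotonicity via the lower bi-Lipschitz constant $c_l$ combined with the domination by $G^{-\kappa}_{0,T}$ and the explicit value $G^{-\kappa}_{0,T}[C]=Ce^{-\kappa T}$, and both derive the limits $h(x)\to\pm\infty$ from that same quantitative bound. The only noteworthy difference is in the continuity step: the paper argues via \eqref{dominate'}, Proposition~\ref{g-exp}(2) and the dominated convergence theorem, whereas you use the almost sure bound $|l(t,x_2+X)-l(t,x_1+X)|\le C_l|x_2-x_1|$ together with monotonicity of $G^{\kappa}_{0,T}$ and $G^{\kappa}_{0,T}[a]=ae^{\kappa T}$ to obtain Lipschitz continuity directly, which is a slight streamlining.
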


\begin{proof}
    For any $x_1<x_2$, we have
    \begin{align*}
        l(t,x_2+X)-l(t,x_1+X)\geq c_l (x_2-x_1).
    \end{align*}
    By Remark \ref{remarkE} (ii), we have $h(x_1)<h(x_2)$, i.e., the function $h$ is strictly increasing. For any $x\geq 0$, note that 
    \begin{align*}
        l(t,x+X)\geq l(t,X)+c_l x, \ l(t,X-x)\leq l(t,x)-c_lx.
    \end{align*}
    It follows from the monotonicity of $\mathcal{E}$, Eq. \eqref{dominate} and Remark \ref{g-exp'} that 
    \begin{align*}
        &\mathcal{E}[l(t,x+X)]\geq \mathcal{E}[l(t,X)+c_l x]\geq \mathcal{E}[l(t,X)]+G^{-\kappa}_{0,T}[c_l x]=\mathcal{E}[l(t,X)]+c_l x e^{-\kappa T},\\
        &\mathcal{E}[l(t,X-x)]\leq \mathcal{E}[l(t,X)-c_l x]\leq \mathcal{E}[l(t,X)]+G^{\kappa}_{0,T}[-c_l x]=\mathcal{E}[l(t,X)]-c_l x e^{-\kappa T}.
    \end{align*}
    Then, we have 
    \begin{align*}
        &\lim_{x\rightarrow+\infty}h(x)\geq \mathcal{E}[l(t,X)]+\lim_{x\rightarrow+\infty}c_l x e^{-\kappa T}=+\infty,\\
        &\lim_{x\rightarrow+\infty}h(-x)\leq \mathcal{E}[l(t,X)]-\lim_{x\rightarrow+\infty}c_l x e^{-\kappa T}=-\infty.
    \end{align*}
    It remains to show that $h$ is continuous. By Proposition \ref{g-exp} and Eq. \eqref{dominate'}, we obtain that 
    \begin{align*}
        \lim_{y\rightarrow x}|\mathcal{E}[l(t,y+X)]-\mathcal{E}[l(t,x+X)]|^2 
        &\leq C_{\kappa, T}\lim_{y\rightarrow x}\E[|l(t,y+X)-l(t,x+X)|^2]=0,
    \end{align*}
   where we have used the dominated convergence theorem for the last equality. The proof is complete.
\end{proof}

The following proposition shows that for any $t\in[0,T]$, the operator $L_t$ is Lipschitz continuous with respect to the $g$-expectation $G^{\kappa}_{0,T}$, which is weaker than Assumption ($H_L$) in \cite{BEH}. 
\begin{proposition}\label{lip}
        For any $X,Y\in L^2(\mathcal{F}_T)$, we have 
        \begin{align*}
            |L_t(X)-L_t(Y)|\leq \frac{C_l}{c_l}e^{\kappa T}G^\kappa_{0,T}\left[|X-Y|\right], \ t\in[0,T].
        \end{align*}
\end{proposition}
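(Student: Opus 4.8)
The plan is to bound the difference of the operator values $L_t(X)-L_t(Y)$ by exploiting the defining infimum and the strict monotonicity of the map $h$ studied in Lemma \ref{lemma1}. By symmetry it suffices to prove the one-sided inequality
\begin{align*}
L_t(X)-L_t(Y)\leq \frac{C_l}{c_l}e^{\kappa T}G^\kappa_{0,T}\left[|X-Y|\right],
\end{align*}
and then swap the roles of $X$ and $Y$. Write $a:=L_t(Y)$, so by definition $\mathcal{E}[l(t,a+Y)]\geq 0$. The strategy is to produce an explicit shift $\delta\geq 0$ such that $a+\delta$ is admissible for $X$, i.e. $\mathcal{E}[l(t,(a+\delta)+X)]\geq 0$; since $L_t(X)$ is the infimum of all such admissible levels, this immediately gives $L_t(X)\leq a+\delta=L_t(Y)+\delta$, and hence the desired bound with $\delta$ equal to the claimed right-hand side.

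The heart of the argument is to estimate how much we must increase the level to compensate for replacing $Y$ by $X$. First I would use the domination property \eqref{dominate} of $\mathcal{E}$ together with the monotonicity in Assumption \ref{assE}(1) to write, for any $\delta\geq 0$,
\begin{align*}
\mathcal{E}[l(t,a+\delta+X)]-\mathcal{E}[l(t,a+Y)]\geq G^{-\kappa}_{0,T}\left[l(t,a+\delta+X)-l(t,a+Y)\right].
\end{align*}
Next, the two-sided Lipschitz bound in Assumption \ref{ass2}(3) controls the increment of $l$: from below $l(t,a+\delta+X)-l(t,a+Y)\geq c_l\delta-C_l|X-Y|$ (the $+c_l\delta$ coming from the genuine increase in level and the $-C_l|X-Y|$ being the worst-case penalty for changing $X$ to $Y$). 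Feeding this lower bound into $G^{-\kappa}_{0,T}$ and using monotonicity, translation via Remark \ref{g-exp'} for the constant $c_l\delta$, and the domination relating $G^{-\kappa}_{0,T}$ to $G^{\kappa}_{0,T}$, I would arrive at
\begin{align*}
\mathcal{E}[l(t,a+\delta+X)]\geq c_l\delta\, e^{-\kappa T}-C_l\, G^{\kappa}_{0,T}[|X-Y|].
\end{align*}
Choosing $\delta$ so that the right-hand side is nonnegative, namely $\delta=\frac{C_l}{c_l}e^{\kappa T}G^{\kappa}_{0,T}[|X-Y|]$, makes $a+\delta$ admissible for $X$ and yields the one-sided inequality.

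The main obstacle is handling the constant and the sublinearity of $G^{-\kappa}_{0,T}$ carefully: one cannot simply split $G^{-\kappa}_{0,T}$ of a sum into a sum of $G^{-\kappa}_{0,T}$ values, since $g$-expectations are only sub/superadditive, not additive. The clean way around this is to bound the increment of $l$ pointwise from below by the random variable $c_l\delta-C_l|X-Y|$ and apply $G^{-\kappa}_{0,T}$ monotonically to that single random variable, then use the superadditivity-type estimate to peel off the deterministic constant $c_l\delta$ exactly as $c_l\delta e^{-\kappa T}$ (Remark \ref{g-exp'}) while bounding the remaining term $G^{-\kappa}_{0,T}[-C_l|X-Y|]=-C_l G^{\kappa}_{0,T}[|X-Y|]$ via Proposition \ref{g-exp}(4)--(5). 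Keeping track of the factor $e^{\kappa T}$, which arises precisely because the $g$-expectation with generator $\kappa(|y|+|z|)$ does not preserve constants, is the one genuinely delicate bookkeeping point; everything else is a direct assembly of the monotonicity, domination, and Lipschitz hypotheses.
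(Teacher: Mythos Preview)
Your proposal is correct and follows essentially the same approach as the paper: both arguments pick the same shift $\delta=\frac{C_l}{c_l}e^{\kappa T}G^\kappa_{0,T}[|X-Y|]$, use the bi-Lipschitz bounds on $l$ to get a pointwise lower bound of the form $c_l\delta-C_l|X-Y|$, and then peel off the constant via Remark~\ref{g-exp'} and the random part via Proposition~\ref{g-exp}(4)--(5). The only cosmetic difference is that the paper substitutes the specific $\delta$ at the outset and keeps $\mathcal{E}[l(t,L_t(X)+X)]$ in the chain (peeling twice through the domination of $\mathcal{E}$), whereas you keep $\delta$ free, route the estimate through $G^{-\kappa}_{0,T}$ and its superadditivity, and invoke $\mathcal{E}[l(t,a+Y)]\geq 0$ at the end.
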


\begin{proof}
    Since $l$ is increasing and bi-Lipschitz in its last component, it is easy to check that 
    \begin{align*}
        &l\left(t,L_t(X)+Y+\frac{C_l}{c_l}e^{\kappa T}G^\kappa_{0,T}[|X-Y|]\right)\\
        \geq &c_l \frac{C_l}{c_l}e^{\kappa T}G^\kappa_{0,T}[|X-Y|]+l(t,L_t(X)+Y)\\
        \geq &C_le^{\kappa T}G^\kappa_{0,T}[|X-Y|]+l(t,L_t(X)+X)-C_l|X-Y|.
    \end{align*}
    Taking nonlinear expectations on both sides and applying Eq. \eqref{dominate}, Proposition \ref{g-exp} and Remark \ref{g-exp'} yield that 
    \begin{align*}
        &\mathcal{E}\left[l\left(t,L_t(X)+Y+\frac{C_l}{c_l}G^\kappa_{0,T}[|X-Y|]\right)\right]\\
        \geq &\mathcal{E}\left[C_le^{\kappa T}G^\kappa_{0,T}[|X-Y|]+l(t,L_t(X)+X)-C_l|X-Y|\right]\\
        \geq &G^{-\kappa}_{0,T}\left[C_le^{\kappa T}G_{0,T}^\kappa[|X-Y|]\right]+\mathcal{E}[l(t,L_t(X)+X)-C_l|X-Y|]\\
        \geq &C_lG_{0,T}^\kappa[|X-Y|]+\mathcal{E}[l(t,L_t(X)+X)]-G_{0,T}^\kappa[C_l|X-Y|]\\
        =&\mathcal{E}[l(t,L_t(X)+X)]\geq 0.
    \end{align*}
    By the definition of the operator $L_t$, we have 
    \begin{align*}
        L_t(Y)\leq L_t(X)+\frac{C_l}{c_l}e^{\kappa T}G_{0,T}^\kappa[|X-Y|].
    \end{align*}
    By symmetry of $X$ and $Y$, we obtain the desired result.
\end{proof}

In the following, we show that the operator $\{L_t(\cdot)\}_{t\in[0,T]}$ is continuous in $t$.
\begin{proposition}\label{continuity}
    Suppose that $S\in \mathcal{S}^2$. Then, the function $\{L_t(S_t)\}_{t\in[0,T]}$ is continuous.
\end{proposition}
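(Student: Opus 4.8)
The plan is to fix an arbitrary $t_0\in[0,T]$, take any sequence $t_n\to t_0$, and bound $|L_{t_n}(S_{t_n})-L_{t_0}(S_{t_0})|$ by splitting it, via the triangle inequality, into a term measuring the change of the argument, $|L_{t_n}(S_{t_n})-L_{t_n}(S_{t_0})|$, and a term measuring the change of the time variable inside the loss function, $|L_{t_n}(S_{t_0})-L_{t_0}(S_{t_0})|$. The first term is immediately controlled by Proposition \ref{lip}, which gives $|L_{t_n}(S_{t_n})-L_{t_n}(S_{t_0})|\leq \frac{C_l}{c_l}e^{\kappa T}G^\kappa_{0,T}[|S_{t_n}-S_{t_0}|]$. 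By Proposition \ref{g-exp}(2) this is dominated by a constant multiple of $(\E[|S_{t_n}-S_{t_0}|^2])^{1/2}$; since $S\in\mathcal{S}^2$ has continuous paths dominated by the integrable envelope $\sup_{r\in[0,T]}|S_r|^2$, the dominated convergence theorem yields $\E[|S_{t_n}-S_{t_0}|^2]\to 0$, so this term vanishes.

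The substance of the proof lies in the second term, namely the continuity in $t$ of $L_t(X)$ for the fixed random variable $X=S_{t_0}\in L^2(\mathcal{F}_T)$. Writing $h_t(x)=\mathcal{E}[l(t,x+X)]$ as in Lemma \ref{lemma1}, that lemma shows each $h_t$ is a strictly increasing continuous bijection of $\mathbb{R}$, so I would record the identity $L_t(X)=(h_t^{-1}(0))^+$. I would first establish pointwise convergence $h_{t_n}(x)\to h_{t_0}(x)$ for every fixed $x$: by Eq. \eqref{dominate'} and Proposition \ref{g-exp}(2), $|h_{t_n}(x)-h_{t_0}(x)|^2\leq C_{\kappa,T}\E[|l(t_n,x+X)-l(t_0,x+X)|^2]$, where the integrand tends to $0$ almost surely by the continuity of $l(\omega,\cdot,x+X(\omega))$ from Assumption \ref{ass2}(1), while it is dominated by an integrable, $t$-independent bound, since the bi-Lipschitz property together with Assumption \ref{ass2}(2) gives $|l(t,x+X)|\leq \sup_{s}|l(s,0)|+C_l(|x|+|X|)\in L^2$; dominated convergence then applies.

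The key remaining point, which I expect to be the main obstacle, is to upgrade this pointwise convergence of the functions $h_{t_n}$ into convergence of their zeros. For this I would exploit that all the $h_t$ share a common modulus of strict monotonicity: for $x_1<x_2$, Assumption \ref{ass2}(3) gives $l(t,x_2+X)-l(t,x_1+X)\geq c_l(x_2-x_1)$, whence Remark \ref{remarkE}(ii) yields $h_t(x_2)-h_t(x_1)\geq c_l(x_2-x_1)e^{-\kappa T}$ uniformly in $t$. Setting $x_0=h_{t_0}^{-1}(0)$, for any $\varepsilon>0$ this lower bound gives $h_{t_0}(x_0+\varepsilon)\geq c_l\varepsilon e^{-\kappa T}>0$ and $h_{t_0}(x_0-\varepsilon)\leq -c_l\varepsilon e^{-\kappa T}<0$; the pointwise convergence at the two points $x_0\pm\varepsilon$ then forces $h_{t_n}(x_0+\varepsilon)>0$ and $h_{t_n}(x_0-\varepsilon)<0$ for all large $n$, so by strict monotonicity $x_0-\varepsilon<h_{t_n}^{-1}(0)<x_0+\varepsilon$. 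Hence $h_{t_n}^{-1}(0)\to x_0=h_{t_0}^{-1}(0)$, and applying the continuous map $x\mapsto x^+$ gives $L_{t_n}(S_{t_0})=(h_{t_n}^{-1}(0))^+\to (x_0)^+=L_{t_0}(S_{t_0})$. Combining the two terms completes the argument; the delicate step is precisely this root-tracking, where the uniform-in-$t$ strict monotonicity furnished by Assumption \ref{ass2}(3) and Remark \ref{remarkE}(ii) is essential to prevent the zeros from drifting away despite mere pointwise convergence of $h_{t_n}$.
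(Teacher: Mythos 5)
Your proof is correct, but it is organized differently from the paper's. You split the increment into a spatial part $|L_{t_n}(S_{t_n})-L_{t_n}(S_{t_0})|$, killed by the Lipschitz estimate of Proposition \ref{lip} together with Proposition \ref{g-exp}(2) and dominated convergence, and a temporal part $|L_{t_n}(S_{t_0})-L_{t_0}(S_{t_0})|$, handled by trapping the zero of $h_{t_n}$ between $x_0\pm\varepsilon$. The paper never invokes Proposition \ref{lip}: it proves directly that $t\mapsto\mathcal{E}[l(t,S_t+a)]$ is continuous for each fixed constant $a$ (by the same $L^2$-estimate and dominated-convergence argument as in Lemma \ref{lemma1}), so the spatial and temporal increments are absorbed in a single step, and it then sandwiches $L_s(S_s)$ via a two-case analysis: if $\mathcal{E}[l(t,S_t)]>0$ the operator vanishes for $s$ near $t$, while if $\mathcal{E}[l(t,S_t)]\leq 0$ then for any $x<L_t(S_t)<y$ one gets $\mathcal{E}[l(s,S_s+x)]<0<\mathcal{E}[l(s,S_s+y)]$, hence $x\leq L_s(S_s)\leq y$, for $s$ near $t$. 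The trapping mechanism is thus identical in spirit — strict monotonicity in the spatial variable plus continuity in $t$ of the composed nonlinear expectation — but your decomposition is more modular (it separates the time-regularity of the operator $L_\cdot(X)$ at fixed $X$ from the path regularity of $S$), and your identity $L_t(X)=(h_t^{-1}(0))^+$ lets the continuous map $x\mapsto x^+$ replace the paper's explicit case distinction. The price is the extra ingredient (Proposition \ref{lip}); the benefit is a cleaner accounting of where each hypothesis enters. One minor remark: the uniform-in-$t$ modulus $c_l e^{-\kappa T}$ is not actually needed for the root-trapping, since strict monotonicity of $h_{t_0}$ alone gives $h_{t_0}(x_0+\varepsilon)>0>h_{t_0}(x_0-\varepsilon)$, and monotonicity of each $h_{t_n}$ then suffices; invoking the quantitative bound is harmless but stronger than required.
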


\begin{proof}
    By a similar analysis as the proof of Lemma \ref{lemma1}, for any constant $a$, the function $\{\mathcal{E}[l(t,S_t+a)]\}_{t\in[0,T]}$ is continuous in $t$. Then, consider the following two cases. 

    Case 1. $\mathcal{E}[l(t,S_t)]>0$. In this case, $L_t(S_t)=0$. Note that
    \begin{align*}
        \lim_{s\rightarrow t}\mathcal{E}[l(s,S_s)]=\mathcal{E}[l(t,S_t)]>0.
    \end{align*}
    It follows that when $|s-t|$ is small enough, $\mathcal{E}[l(s,S_s)]>0$, which implies that $L_s(S_s)=0$.

    Case 2. $\mathcal{E}[l(t,S_t)]\leq  0$. In this case, we have $\mathcal{E}[l(t,S_t+L_t(S_t))]=0$. Besides, for any $x<L_t(S_t)<y$,  we have
    \begin{align*}
        \lim_{s\rightarrow t}\mathcal{E}[l(s,S_s+x)]&=\mathcal{E}[l(t,S_t+x)]<0=\mathcal{E}[l(t,S_t+L_t(S_t))]\\
        &<\mathcal{E}[l(t,S_t+y)]=\lim_{s\rightarrow t}\mathcal{E}[l(s,S_s+y)],
    \end{align*}
    which yields that $\mathcal{E}[l(s,S_s+x)]<0<\mathcal{E}[l(s,S_s+y)]$ when $|s-t|$ is small enough. Consequently, we have $x\leq L_s(S_s)\leq y$. The proof is complete.
\end{proof}

Now, we first establish the well-posedness of the BSDE with nonlinear expectation reflection \eqref{nonlinearyz} when the driver $f$ does not depend on $(Y,Z)$. 

\begin{proposition}\label{prop7}
Suppose that $l,\mathcal{E}$ satisfy Assumption \ref{ass2} and Assumption \ref{assE}. Let $\xi\in L^2(\mathcal{F}_T)$ be such that $\mathcal{E}[l(T,\xi)]\geq 0$. Given $C\in \mathcal{H}^2$, the BSDE with nonlinear expectation reflection
\begin{equation}\label{nonlinearnoyz}
\begin{cases}
Y_t=\xi+\int_t^T C_sds-\int_t^T Z_s dB_s+K_T-K_t, \\
\mathcal{E}[l(t,Y_t)]\geq 0 \textrm{ and }
\int_0^T \mathcal{E}[l(t,Y_t)]dK_t=0,
\end{cases}
\end{equation}
has a unique solution $(Y,Z,K)\in \mathcal{S}^2\times \mathcal{H}^2\times I[0,T]$.
\end{proposition}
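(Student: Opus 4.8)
The plan is to exploit the fact that, in this setting, the reflecting process $K$ is required to lie in $I[0,T]$ and is therefore a \emph{deterministic} continuous nondecreasing function; this decouples the reflection from the martingale part of the equation. First I would solve the associated non-reflected BSDE: since the driver $C$ does not depend on $(Y,Z)$, the equation $\bar Y_t = \xi + \int_t^T C_s\,ds - \int_t^T \bar Z_s\,dB_s$ has a unique solution $(\bar Y,\bar Z)\in\mathcal{S}^2\times\mathcal{H}^2$ by \cite{PP90} (equivalently, via the martingale representation of $\E[\xi+\int_0^T C_s\,ds\mid\mathcal{F}_t]$). Since any candidate solution of \eqref{nonlinearnoyz} has $K$ deterministic, subtracting the term $K_T-K_t$ from the forward equation turns it into the non-reflected BSDE, so uniqueness forces $Y_t-(K_T-K_t)=\bar Y_t$ and $Z=\bar Z$. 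The whole problem thus reduces to finding the deterministic function $K$, or equivalently $D_t:=K_T-K_t$, with the constraints rewritten through the operator $L_t$ from \eqref{operator}.

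Next I would set $g_t:=L_t(\bar Y_t)$. By Proposition \ref{continuity} the map $t\mapsto g_t$ is continuous on $[0,T]$; it is nonnegative and bounded, the bound following from Proposition \ref{lip} together with $\bar Y\in\mathcal{S}^2$ and Proposition \ref{g-exp} item (2). Crucially, $g_T=L_T(\xi)=0$ because $\mathcal{E}[l(T,\xi)]\geq 0$. Writing $h_t(x):=\mathcal{E}[l(t,\bar Y_t+x)]$, which is strictly increasing by Lemma \ref{lemma1} and Remark \ref{remarkE}(ii), the constraint $\mathcal{E}[l(t,\bar Y_t+D_t)]\geq 0$ is (for $D_t\geq 0$) equivalent to $D_t\geq g_t$. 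Since $D$ must be continuous, nonincreasing, nonnegative with $D_T=0$, the minimal dominating function is the backward running supremum
\begin{align*}
D_t:=\sup_{s\in[t,T]} g_s,\qquad K_t:=D_0-D_t .
\end{align*}
This is exactly the solution of the associated Skorokhod problem: $D$ is continuous and nonincreasing, $K\in I[0,T]$, and the measure $dK_t=-\,dD_t$ is carried by the contact set $\{t:D_t=g_t\}$.

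Finally I would verify that $(Y,Z,K):=(\bar Y+D,\bar Z,K)$ solves \eqref{nonlinearnoyz}. The forward equation holds with the same $Z=\bar Z$ because $D$ is deterministic and contributes no martingale part; the inequality constraint holds since $D_t\geq g_t$ and $x\mapsto h_t(x)$ is increasing. For the Skorokhod condition, on the support of $dK$ one has $D_t=g_t=L_t(\bar Y_t)$; moreover $dK$ does not charge the terminal interval on which $D\equiv 0$ (once $D$ hits $0$ it stays there), so on the support $g_t>0$ and hence $\mathcal{E}[l(t,Y_t)]=\mathcal{E}[l(t,\bar Y_t+L_t(\bar Y_t))]=0$, giving $\int_0^T\mathcal{E}[l(t,Y_t)]\,dK_t=0$. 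Uniqueness follows from the reduction of the first paragraph together with uniqueness of the Skorokhod solution. I expect the main obstacle to be this Skorokhod step: justifying the flat-off property of the running supremum and the passage between the analytic condition $\mathcal{E}[l(t,Y_t)]=0$ and the geometric contact condition $D_t=g_t$, which rely on the continuity of $t\mapsto L_t(\bar Y_t)$ from Proposition \ref{continuity} and on the identity $L_T(\xi)=0$ ensured by $\mathcal{E}[l(T,\xi)]\geq 0$.
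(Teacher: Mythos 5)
Your proposal is correct and follows essentially the same route as the paper: since $\bar Y_t$ coincides with the paper's $X_t=\E_t\left[\xi+\int_t^T C_s\,ds\right]$, your $D_t=\sup_{s\in[t,T]}L_s(\bar Y_s)$ is exactly the paper's definition $K_T-K_t=\sup_{s\in[t,T]}L_s(X_s)$, and your verification of the constraint and of the Skorokhod condition (flat-off of the backward running supremum, $\mathcal{E}[l(t,X_t+L_t(X_t))]=0$ when $L_t(X_t)>0$, no charge on $\{D=0\}$) matches the paper's Step 2. Your uniqueness argument via the deterministic Skorokhod problem is, once unwound, the same contradiction argument the paper gives in Step 1 (both hinge on the strict monotonicity of $x\mapsto\mathcal{E}[l(t,x+X_t)]$ from Lemma \ref{lemma1} plus the Skorokhod condition), so there is no substantive difference.
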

\begin{proof}
The proof is similar with the one for Proposition 7 in \cite{BEH}. For readers' convenience, we give a short proof here.

\textbf{Step 1. Uniqueness.} Suppose that $(Y^i,Z^i,K^i)$ are two solutions to \eqref{nonlinearnoyz}, $i=1,2$. Note that $(Y^i+K^i,Z^i)$ may be seen as the solution to the BSDE with terminal value $\xi+K^i_T$ and driver $C$. By the uniqueness of solutions to BSDEs, it suffices to prove that $K^1\equiv K^2$. Otherwise, suppose that there exists $t_1<T$ such that
\begin{displaymath}
K^1_T-K^1_{t_1}>K^2_T-K^2_{t_1}.
\end{displaymath} 
We define
\begin{displaymath}
t_2:=\inf\{t\geq t_1: K_T^1-K^1_t=K_T^2-K^2_t\}.
\end{displaymath}
Then, we have
\begin{displaymath}
K_T^1-K^1_t> K^2_T-K^2_t, \ t_1\leq t<t_2.
\end{displaymath}
Set $X_t=\E_t\left[\xi+\int_t^T C_sds\right]$. Since $K^i$ are deterministic, we have $Y_t^i=X_t+K^i_T-K^i_t$, $i=1,2$. By a similar analysis as in the proof of Lemma \ref{lemma1} for the strict comparison property, for any $t_1\leq t<t_2$, we obtain that
\begin{align*}
\mathcal{E}[l(t,X_t+K_T^1-K_t^1)]>\mathcal{E}[l(t,X_t+K_T^2-K_t^2)]\geq 0.
\end{align*}
The Skorokhod condition implies that $dK^{1}_t=0$ on the interval $[t_1,t_2]$. We deduce that
\begin{align*}
K_T^1-K_{t_2}^1=K_T^1-K_{t_1}^1>K_T^2-K_{t_1}^2\geq K^2_T-K^2_{t_2},
\end{align*}
which contradicts the definition of $t_2$. Therefore, we have $K^1\equiv K^2$.

\textbf{Step 2. Existence.} Recalling that $X_t=\E_t\left[\xi+\int_t^T C_sds\right]$, for any $t\in[0,T]$, we define
\begin{equation*}
    K_t:=\sup_{s\in[0,T]}L_s(X_s)-\sup_{s\in[t,T]}L_s(X_s).
\end{equation*}
By Proposition \ref{continuity}, it is easy to check that $K\in I[0,T]$. Let $(\tilde{Y},Z)$ be the solution to the BSDE with terminal value $\xi$ and driver $C$. Set $Y_t=\tilde{Y}_t+K_T-K_t$, $t\in[0,T]$. We claim that $(Y,Z,K)$ is the solution to \eqref{nonlinearnoyz}. First, it is easy to check that 
\begin{align*}
    Y_t=\xi+\int_t^T C_s ds-\int_t^T Z_s dB_s +K_T-K_t=X_t+K_T-K_t, \ t\in[0,T].
\end{align*}
Second, note that
\begin{equation}\label{representationK}
    K_T-K_t=\sup_{s\in[t,T]}L_s(X_s), \ t\in[0,T].
\end{equation}
Then, we obtain that 
\begin{align*}
    \mathcal{E}[l(t,Y_t)]=\mathcal{E}[l(t,X_t+K_T-K_t)]\geq \mathcal{E}[l(t,X_t+L_t(X_t))]\geq 0.
\end{align*}
Finally, by the definition of $K$, we have 
\begin{itemize}
    \item $\sup_{s\in[t,T]}L_s(X_s)=L_t(X_t)$, $dK_t$-a.e.
    \item $I_{\{L_t(X_t)=0\}}=0$, $dK_t$-a.e.
    \item If $L_t(X_t)>0$, then $\mathcal{E}[l(t,X_t+L_t(X_t))]=0$.
\end{itemize}
It follows that 
\begin{align*}
 \int_0^T \mathcal{E}[l(t,Y_t)]dK_t=\int_0^T \mathcal{E}[l(t,X_t+L_t(X_t))]dK_t=\int_0^T \mathcal{E}[l(t,X_t+L_t(X_t))]I_{\{L_t(X_t)>0\}}dK_t =0. 
\end{align*}
The proof is complete.
\end{proof}

We are now in a position to state the main result of this section.

\begin{theorem}\label{main}
Given $\xi\in L^2(\mathcal{F}_T)$, suppose that $l$ satisfy Assumption \ref{ass2} with $\mathcal{E}[l(T,\xi)]\geq 0$, $f$ and $\mathcal{E}$ satisfy Assumption \ref{assf} and Assumption \ref{assE}. Then, the BSDE with nonlinear expectation reflection \eqref{nonlinearyz} has a unique solution $(Y,Z,K)\in \mathcal{S}^2\times \mathcal{H}^2\times I[0,T]$.
\end{theorem}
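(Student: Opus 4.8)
The plan is to remove the dependence of the driver on the solution by freezing it, and then to run a contraction mapping on a short interval, pasting the local solutions together. Given a pair $(U,V)\in\mathcal{S}^2\times\mathcal{H}^2$, I would set $C_s:=f(s,U_s,V_s)$; by Assumption \ref{assf} this process lies in $\mathcal{H}^2$, so Proposition \ref{prop7} yields a unique triple $(Y,Z,K)$ solving \eqref{nonlinearnoyz} with this coefficient. This defines a map $\Phi(U,V):=(Y,Z)$ whose fixed points are exactly the solutions of \eqref{nonlinearyz}. Before iterating I would note that Proposition \ref{prop7}, together with Lemma \ref{lemma1} and Propositions \ref{lip} and \ref{continuity}, carries over verbatim to any subinterval $[t_0,T']\subseteq[0,T]$ with terminal time $T'$ and terminal value $\zeta\in L^2(\mathcal{F}_{T'})$ satisfying $\mathcal{E}[l(T',\zeta)]\geq0$, since the operators $L_t$, $\mathcal{E}$ and $G^{\kappa}_{0,T}$ act on each time $t$ separately.

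Next I would prove the contraction estimate on $[t_0,T]$ with $\delta:=T-t_0$ (the estimate on a generic subinterval being identical). For two inputs $(U^i,V^i)$, $i=1,2$, write $C^i_s=f(s,U^i_s,V^i_s)$, $X^i_t=\E_t[\zeta+\int_t^T C^i_s\,ds]$, and recall from the construction in Proposition \ref{prop7} that $Y^i_t=X^i_t+(K^i_T-K^i_t)$ with $K^i_T-K^i_t=\sup_{s\in[t,T]}L_s(X^i_s)$ as in \eqref{representationK}. Since the supremum is $1$-Lipschitz, the difference of the (deterministic) reflecting terms obeys
\begin{align*}
\sup_{t\in[t_0,T]}\big|(K^1_T-K^1_t)-(K^2_T-K^2_t)\big|\leq\sup_{s\in[t_0,T]}\big|L_s(X^1_s)-L_s(X^2_s)\big|,
\end{align*}
and Proposition \ref{lip} together with Proposition \ref{g-exp}(2) turns the right-hand side into a genuine $L^2$ bound of the form $C\sup_{s\in[t_0,T]}\big(\E[|X^1_s-X^2_s|^2]\big)^{1/2}$. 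The process difference $X^1-X^2$ and the martingale parts $Z^1-Z^2$ are then controlled by the standard a priori estimate for BSDEs, which, because the terminal values coincide, produces a factor $\delta$ in front of $\E\big[\int_{t_0}^T|C^1_s-C^2_s|^2\,ds\big]$; the Lipschitz property of $f$ finally bounds the latter by the norm of $(U^1-U^2,V^1-V^2)$. Collecting the pieces gives, in the $\mathcal{H}^2\times\mathcal{H}^2$ norm,
\begin{align*}
\|\Phi(U^1,V^1)-\Phi(U^2,V^2)\|^2\leq C^{*}\,\delta\,\|(U^1-U^2,V^1-V^2)\|^2,
\end{align*}
where $C^{*}$ depends only on $\lambda,C_l,c_l,\kappa$ and $T$.

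Choosing $\delta$ so small that $C^{*}\delta<1$ makes $\Phi$ a contraction, and its unique fixed point is the unique solution of \eqref{nonlinearyz} on $[T-\delta,T]$. Because $\delta$ depends only on the structural constants and not on the terminal data, I would then solve backward on $[T-2\delta,T-\delta]$ with terminal value $Y_{T-\delta}$ (which satisfies $\mathcal{E}[l(T-\delta,Y_{T-\delta})]\geq0$ since the local solution fulfils the constraint), and repeat, covering $[0,T]$ in finitely many steps; concatenating the local triples yields a global solution in $\mathcal{S}^2\times\mathcal{H}^2\times I[0,T]$. Global uniqueness follows by the same partition: any solution of \eqref{nonlinearyz} restricts on each subinterval to a solution of the corresponding problem with its own right-endpoint value as terminal datum, so the local uniqueness propagates backward.

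The main obstacle is precisely the contraction estimate for the reflecting term $K$. Unlike the pointwise-constrained case, $K_T-K_t$ is a running supremum and is therefore nonlocal in time, while the operator $L_t$ is only Lipschitz with respect to the nonlinear $g$-expectation $G^{\kappa}_{0,T}$ rather than in a pointwise or sup-norm sense. The crux is thus to convert the $G^{\kappa}_{0,T}$-Lipschitz bound of Proposition \ref{lip} into an honest $L^2$ contraction via Proposition \ref{g-exp}(2), and to absorb the time-supremum by working on a short interval, which is exactly what supplies the decisive small factor $\delta$.
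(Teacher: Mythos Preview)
Your proposal is correct and follows essentially the same route as the paper: freeze the driver via Proposition \ref{prop7}, control the difference of the deterministic reflecting terms through the representation \eqref{representationK} together with Proposition \ref{lip} and Proposition \ref{g-exp}(2), obtain a contraction on a short interval whose length depends only on the structural constants, and then stitch backward over $[0,T]$. The only cosmetic difference is that the paper runs the contraction directly in the $\mathcal{S}^2\times\mathcal{H}^2$ norm rather than $\mathcal{H}^2\times\mathcal{H}^2$, but this does not affect the argument.
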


\begin{proof}
We first proof that there exist a constant $\delta$ depending on $c_l,C_l,\lambda,\kappa,T$, such that for any $h\in(0,\delta]$, the BSDE with nonlinear expectation reflection \eqref{nonlinearyz} has a unique solution $(Y,Z,K)$ on the time interval $[T-h,T]$. Given $(U,V)\in \mathcal{S}^2_{[T-h,T]}\times\mathcal{H}^2_{[T-h,T]}$, consider the following BSDE with nonlinear expectation reflection 
\begin{displaymath}
\begin{cases}
Y_t=\xi+\int_t^T f(s,U_s,V_s)ds-\int_t^T Z_s dB_s+K_T-K_t, \\
\mathcal{E}[l(t,Y_t)]\geq 0, t\in[T-h,T], \\
K\in I[T-h,T] \textrm{ and } \int_{T-h}^T \mathcal{E}[l(t,Y_t)]dK_t=0.
\end{cases}
\end{displaymath}
By a similar analysis as the proof of Proposition \ref{prop7}, the above reflected BSDE admits a unique solution  $(Y,Z,K)\in \mathcal{S}^2_{[T-h,T]}\times \mathcal{H}^2_{[T-h,T]}\times I[T-h,T]$. We define a mapping $\Gamma:\mathcal{S}^2_{[T-h,T]}\times\mathcal{H}^2_{[T-h,T]}\rightarrow \mathcal{S}^2_{[T-h,T]}\times\mathcal{H}^2_{[T-h,T]}$ by setting
\begin{align*}
    \Gamma(U,V)=(Y,Z).
\end{align*}
We show that $\Gamma$ is a contraction mapping when $\delta$ is sufficiently small. For this purpose, given $(U^i,V^i)\in \mathcal{S}^2_{[T-h,T]}\times\mathcal{H}^2_{[T-h,T]}$, $i=1,2$, set $(Y^i,Z^i)=\Gamma(U^i,V^i)$ and
\begin{align*}
\hat{F}_t=F^1_t-F^2_t, \textrm{ where } F=Y,Z,K,U,V, \ \hat{f}_t=f(t,U^1_t,V^1_t)-f(t,U^2_t,V^2_t). 
\end{align*}
Then, we have  
\begin{align*}
\hat{Y}_t=\E_t\left[\int_t^T \hat{f}_sds\right]+\hat{K}_T-\hat{K}_t.
\end{align*}
Applying the Doob inequality, there exists a constant $M(\lambda)$ depending on $\lambda$, such that 
\begin{equation}\label{haty}
\E\left[\sup_{t\in[T-h,T]}|\hat{Y}_t|^2\right]\leq M(\lambda)\E\left[\left(\int_{T-h}^T \left[|\hat{U}_t|+|\hat{V}_t|\right]dt\right)^2\right]+M(\lambda)\sup_{t\in[T-h,T]}|\hat{K}_T-\hat{K}_t|^2.
\end{equation}
By Proposition \ref{lip} and recalling \eqref{representationK}, we have
\begin{align*}
\sup_{t\in[T-h,T]}|\hat{K}_T-\hat{K}_t|&\leq  2\sup_{t\in[T-h,T]}|L_t(X^1_t)-L_t(X^2_t)|\\
&\leq \frac{2C_l}{c_l}e^{\kappa T}\sup_{t\in[T-h,T]}G^\kappa_{0,T}[|X^1_t-X^2_t|]\\
&\leq \frac{2C_l}{c_l}e^{\kappa T}G_{0,T}^\kappa\left[\sup_{t\in[T-h,T]}\E_t\left[\int_{T-h}^T|\hat{f}_s|ds\right]\right],
\end{align*}
where 
\begin{align*}
X^i_t=\E_t\left[\xi+\int_t^T f(s,U^i_s,V^i_s)ds\right].
\end{align*}
By Proposition \ref{g-exp} and the BDG inequality, there exists a positive constant $M(c_l,C_l,\lambda,\kappa,T)$, such that
\begin{equation}\label{hatk}
\sup_{t\in[0,T]}|\hat{K}_T-\hat{K}_t|^2\leq M(c_l,C_l,\lambda,\kappa,T)\E\left[\left(\int_{T-h}^T \left[|\hat{U}_t|+|\hat{V}_t|\right]dt\right)^2\right].
\end{equation}
Combining Eqs. \eqref{haty} and \eqref{hatk} yields that
\begin{equation}\label{haty'}
\E\left[\sup_{t\in[0,T]}|\hat{Y}_t|^2\right]\leq M(c_l,C_l,\lambda,\kappa,T)\E\left[\left(\int_{T-h}^T \left[|\hat{U}_t|+|\hat{V}_t|\right]dt\right)^2\right].
\end{equation}

On the other hand, note that
\begin{align*}
\int_{T-h}^T \hat{Z}_s dB_s=\int_{T-h}^T\hat{f}_s ds-\hat{Y}_{T-h}+\hat{K}_T-\hat{K}_{T-h}.
\end{align*}
Applying \eqref{hatk}, \eqref{haty'} and the H\"{o}lder inequality yields that
\begin{align*}
\E\left[\int_{T-h}^T |\hat{Z}_s|^2ds\right]\leq M(c_l,C_l,\lambda,\kappa,T)\E\left[\left(\int_{T-h}^T \left[|\hat{U}_t|+|\hat{V}_t|\right]dt\right)^2\right].
\end{align*}
All the above analysis indicates that 
\begin{align*}
&\E\left[\sup_{t\in[T-h,T]}|\hat{Y}_t|^2+\int_{T-h}^T |\hat{Z}_s|^2ds\right]\\
\leq &M(c_l,C_l,\lambda,\kappa,T)\E\left[\left(\int_{T-h}^T \left[|\hat{U}_t|+|\hat{V}_t|\right]dt\right)^2\right]\\
\leq &M(c_l,C_l,\lambda,\kappa,T)h\max(1,h)\E\left[\sup_{t\in[T-h,T]}|\hat{U}_t|^2+\int_{T-h}^T|\hat{V}_s|^2ds\right].
\end{align*}
Let $\delta$ be sufficiently small such that $M(c_l,C_l,\lambda,\kappa,T)\delta\max(1,\delta)<1$. Then $\Gamma$ is a contraction mapping. It follows that the BSDE with nonlinear expectation reflection \eqref{nonlinearyz} has a unique solution on $[T-h,T]$ for any $h\in(0, \delta]$.

For the general case, by a standard BSDE approach, we split the whole interval $[0,T]$ into finitely many small intervals. On each small interval, we can get a local solution. The global solution on the whole time interval can be constructed by stitching the  local ones. Uniqueness is a direct consequence from the uniqueness on each small interval. The proof is complete. 
\end{proof}

\begin{remark}
    Another frequently used method to construct the solution to the reflected problem is approximation via penalization (see \cite{EKPPQ,CK,PX} for the case of classical reflection and \cite{BEH,Li2024} for the case of mean reflection). However, this method is not valid for the case of nonlinear expectation reflection. Consider a simple case that the loss function $l$ is linear, i.e., the constraint is written as
    \begin{align*}
        \mathcal{E}[Y_t]\geq l_t, \ t\in[0,T],
    \end{align*}
    for some $l\in C[0,T]$. For any $n\in\mathbb{N}$, a candidate family of penalized equations should be 
    \begin{align*}
        Y_t^n=\xi+\int_t^T f(s,Y^n_s,Z^n_s)ds+n\int_t^T (\mathcal{E}[Y^n_s]-l_s)^- ds-\int_t^T Z^n_sdB_s,
    \end{align*}
    where we just replace the classical expectation in the penalized mean-field BSDEs in Proposition 6 of \cite{BEH} by our nonlinear expectation $\mathcal{E}$. As far as we know, there is no existing result for the well-posedness of this type of BSDEs. The second problem is that, even if we have established the existence and uniqueness result for the above penalized ``nonlinear expectation BSDEs", since their coefficients are not nondecreasing in the nonlinear expectation of the solutions, a natural conjecture is that we do not have the comparison property of $Y^n$ in $n$ (see Example 3.2 and Theorem 3.2 for the comparison theorem for mean-field BSDEs in \cite{BLP}). Moreover, due to the nonlinearity of $\mathcal{E} $, we will encounter some difficulty (e.g., the Fubini theorem does not hold) when establishing the uniform a priori estimate on the sequence $(Y^n,Z^n,K^n)$, where $K^n:=\int_0^\cdot n((\mathcal{E}[Y^n_s]-l_s)^-)ds$. Therefore, the proof of Proposition 6 in \cite{BEH} is no longer valid for the nonlinear expectation case. 
\end{remark}

\subsection{Properties of solution to BSDE with nonlinear expectation reflection}

In this section, we investigate the comparison theorems for the BSDE with nonlinear expectation reflection. To this end, let us first introduce the following definition. 
\begin{definition}
    Let $\mathcal{E}^i:L^2(\mathcal{F}_T)\rightarrow\mathbb{R}$ be two operators. We call $\mathcal{E}^1$ is stronger than $\mathcal{E}^2$, denoted by $\mathcal{E}^1\geq \mathcal{E}^2$, if for any $X\in L^2(\mathcal{F}_T)$,  we have $\mathcal{E}^1[X]\geq \mathcal{E}[X^2]$. 
\end{definition}

\begin{remark}
 It is natural that $G^{-\kappa}_{0,T}\leq \E\leq G^{\kappa}_{0,T}$. Moreover, recalling Proposition \ref{g-exp}, we have $G^{-\kappa}_{0,T}\leq G_{0,T}\leq G^{\kappa}_{0,T}$. 
\end{remark}
 
We first provide the pointwise comparison property for the solutions to BSDEs with nonlinear expectation reflection. However, recall that Example 3.3 in \cite{HHLLW} indicates that the pointwise comparison theorem for mean reflected BSDEs does not hold in general. Therefore, we can only obtain the desired result under some typical structure of the parameters. Intuitively, the larger the loss function and the stronger the nonlinear expectation, the less the force aiming to push the solution upward. Therefore, it is natural to conjecture that if the loss function is larger and the nonlinear expectation is stronger, then the corresponding solution is smaller.
\begin{proposition}\label{prop11}
Suppose that $l^i$, $\mathcal{E}^i$, $i=1,2$, satisfy Assumption \ref{ass2} and Assumption \ref{assE} and the coefficient $f$ satisfying Assumption \ref{assf} takes the following form
\begin{align}\label{equation24}
f:(t,y,z)\mapsto a_t y+h(t,z),
\end{align}
where $a$ is a deterministic and bounded measurable function.  Given $\xi\in L^2(\mathcal{F}_T)$, $c^1,c^2\in\mathbb{R}$ with $c^1\geq c^2$ and $\mathcal{E}^i[l^i(T,\xi+c^i)]\geq 0$, $i=1,2$. 
Let $(Y^i,Z^i,K^i)$ be the solution to BSDE with nonlinear expectation reflection with parameters $(\xi+c^i,f,l^i,\mathcal{E}^i)$. If for any $(t,x)\in[0,T]\times\mathbb{R}$, $l^1(t,x)\leq l^2(t,x)$, a.s. and $\mathcal{E}^2$ is stronger than $\mathcal{E}^1$, then, for any $t\in[0,T]$, we have $Y^1_t\geq  Y^2_t$.
\end{proposition}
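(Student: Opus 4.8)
The plan is to reduce the statement to a \emph{deterministic} inequality for the difference $\bar{Y}:=Y^1-Y^2$, and then to show that the reflecting force of the second system cannot act at any time where $Y^1$ lies below $Y^2$. Throughout I write $\bar{Z}=Z^1-Z^2$, $\bar{K}=K^1-K^2$ and $X^i_t=\E_t[\xi+c^i+\int_t^T f(s,Y^i_s,Z^i_s)\,ds]$.

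First I would record two structural facts. By the construction behind Proposition \ref{prop7} and the representation \eqref{representationK}, each $K^i$ is a deterministic nondecreasing function, since $L^i_s(X^i_s)$ is a real number for every $s$; moreover the Skorokhod condition forces $dK^i$ to be carried by $\{s:\mathcal{E}^i[l^i(s,Y^i_s)]=0\}$. Next, using the special form $f(t,y,z)=a_ty+h(t,z)$, I would linearize: writing $h(s,Z^1_s)-h(s,Z^2_s)=b_s\bar{Z}_s$ with $|b_s|\le\lambda$, the difference solves
\[
\bar{Y}_t=(c^1-c^2)+\int_t^T\bigl(a_s\bar{Y}_s+b_s\bar{Z}_s\bigr)\,ds-\int_t^T\bar{Z}_s\,dB_s+\bar{K}_T-\bar{K}_t .
\]
Introducing the equivalent measure $\Q$ with Girsanov kernel $b$ (so that $\WW_t=B_t-\int_0^t b_s\,ds$ is a $\Q$-Brownian motion) and the strictly positive deterministic factor $\Lambda_t=\exp(\int_0^t a_r\,dr)$, It\^o's formula gives
\[
d(\Lambda_t\bar{Y}_t)=\Lambda_t\bar{Z}_t\,d\WW_t-\Lambda_t\,d\bar{K}_t .
\]
Taking $\Q$-conditional expectation and using that $\bar{Y}_T=c^1-c^2$, $\Lambda$ and the $K^i$ are all deterministic, the stochastic integral drops out and I obtain the deterministic identity
\[
\Lambda_t\bar{Y}_t=\Lambda_T(c^1-c^2)+\int_t^T\Lambda_s\,dK^1_s-\int_t^T\Lambda_s\,dK^2_s .
\]
In particular $\bar{Y}$ is a deterministic continuous function of $t$; this is the feature that makes the strict comparison of Remark \ref{remarkE} usable below, and it is where the linear form of $f$ is essential.

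The key step, and the main obstacle, is to control the downward influence of $K^2$, namely to prove that $dK^2$ charges no time at which $\bar{Y}<0$. Suppose $dK^2_s>0$ while $\bar{Y}_s<0$. The Skorokhod condition for the second system gives $\mathcal{E}^2[l^2(s,Y^2_s)]=0$. Since $Y^2_s-Y^1_s=-\bar{Y}_s$ is a strictly positive constant, Assumption \ref{ass2} yields $l^2(s,Y^2_s)-l^2(s,Y^1_s)\ge c_l(-\bar{Y}_s)>0$, and the strict comparison in Remark \ref{remarkE}(ii), applied to $\mathcal{E}^2$, gives $\mathcal{E}^2[l^2(s,Y^1_s)]<\mathcal{E}^2[l^2(s,Y^2_s)]=0$. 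On the other hand, $l^1\le l^2$ together with $\mathcal{E}^1\le\mathcal{E}^2$ and the monotonicity of $\mathcal{E}^2$ give $\mathcal{E}^1[l^1(s,Y^1_s)]\le\mathcal{E}^2[l^2(s,Y^1_s)]$, while the constraint for the first system gives $\mathcal{E}^1[l^1(s,Y^1_s)]\ge0$; combining, $0\le\mathcal{E}^2[l^2(s,Y^1_s)]<0$, a contradiction. Hence $dK^2=0$ on $\{\bar{Y}<0\}$.

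Finally I would conclude by contradiction. Suppose $\bar{Y}_{t_0}<0$ for some $t_0<T$. As $\bar{Y}$ is continuous with $\bar{Y}_T=c^1-c^2\ge0$, set $\tau=\inf\{t\ge t_0:\bar{Y}_t\ge0\}$, so that $\bar{Y}<0$ on $[t_0,\tau)$ and $\Lambda_\tau\bar{Y}_\tau=0$. Evaluating the deterministic identity between $t_0$ and $\tau$, using $dK^2=0$ on $[t_0,\tau)$ from the previous step and $dK^1\ge0$,
\[
\Lambda_{t_0}\bar{Y}_{t_0}=\int_{t_0}^{\tau}\Lambda_s\,dK^1_s-\int_{t_0}^{\tau}\Lambda_s\,dK^2_s=\int_{t_0}^{\tau}\Lambda_s\,dK^1_s\ge0,
\]
contradicting $\bar{Y}_{t_0}<0$ (recall $\Lambda_{t_0}>0$). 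Therefore $\bar{Y}_t\ge0$, i.e. $Y^1_t\ge Y^2_t$, for all $t\in[0,T]$. I expect the routine points to be the justification that the $K^i$ are deterministic and that the $\Q$-martingale term has zero conditional expectation (both handled by boundedness of $a,b$ and equivalence of $\Q$ and $\P$), but the genuine conceptual hurdle is the control of $K^2$ above, which is precisely where the failure of pointwise comparison in general is circumvented by the structural hypotheses $c^1\ge c^2$, $l^1\le l^2$ and $\mathcal{E}^1\le\mathcal{E}^2$.
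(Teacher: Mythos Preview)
Your proof is correct and follows essentially the same strategy as the paper's: reduce to a \emph{deterministic} identity for $Y^1-Y^2$, use the hypotheses $l^1\le l^2$, $\mathcal{E}^1\le\mathcal{E}^2$ together with the strict comparison of Remark~\ref{remarkE}(ii) to show that $dK^2$ cannot charge $\{\bar Y<0\}$, and conclude by a continuity/Skorokhod contradiction. The only technical difference is in how the determinism of $\bar Y$ is obtained: the paper first performs the exponential change of variables $\tilde Y_t=e^{A_t}Y_t$ on the whole system to reduce to a driver independent of $y$, after which $(Y^i-c^i-(K^i_T-K^i),Z^i)$ solve the \emph{same} BSDE and uniqueness gives $Z^1=Z^2$ and the deterministic identity directly, with no change of measure; you instead linearize $h(s,Z^1_s)-h(s,Z^2_s)=b_s\bar Z_s$ and use Girsanov to remove the $z$-contribution. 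Both routes are standard and lead to the same contradiction argument; the paper's is slightly more economical since it avoids the (routine but nontrivial) check that the $\Q$-stochastic integral is a true martingale.
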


\begin{proof}
Let $(Y,Z,K)$ be the solution to the nonlinear expectation reflected BSDE with parameters $(\xi,f,l,\mathcal{E})$. We define $A_t:=\int_0^t a_s ds$, $t\in[0,T]$. Then, set
\begin{align*}
\tilde{Y}_t=e^{A_t} Y_t,  \ \tilde{Z}_t=e^{A_t} Z_t, \ \tilde{K}_t=\int_0^t e^{A_s} dK_s. 
\end{align*}
It is easy to check that $(\tilde{Y},\tilde{Z},\tilde{K})$ is the solution to the nonlinear expectation reflected BSDE with parameters $(\tilde{\xi},\tilde{f},\tilde{l},\mathcal{E})$, where 
\begin{align*}
\tilde{\xi}=e^{A_T}\xi, \ \tilde{f}(t,z)=e^{A_t}h(t,e^{-A_t}z), \ \tilde{l}(t,y)=l(t,e^{-A_t}y).
\end{align*}
Therefore, it suffices to prove the result for the case where $f$ does not depends on $y$. In this special case, note that $(Y^i-c^i-(K^i_T-K^i),Z^i)$, $i=1,2$,  are solutions the to BSDE with terminal value $\xi$ and coefficient $f$. It follows from the uniqueness result for BSDEs  that
\begin{align}\label{equation25}
Y^1_t-c^1-(K^1_T-K^1_t)=Y^2_t-c^2-(K^2_T-K^2_t), \ t\in[0,T]. 
\end{align} 
To obtain the desired result, we only need to show that $c^1+K^1_T-K^1_t\geq c^2+K^2_T-K^2_t$, for any $t\in[0,T]$. We prove this fact by contradiction. Suppose that there exists some $t_1<T$, such that 
\begin{align*}
c^1+K^1_T-K^1_{t_1}<c^2+ K^2_T-K^2_{t_1}.
\end{align*}
Set 
\begin{align*}
t_2=\inf\{t\geq t_1: c^2+K^2_T-K^2_t\leq c^1+{K}^1_T-{K}^1_t\}.
\end{align*}
Noting that $c^1\geq c^2$, it follows that $t_2\leq T$. Since $K^i$, $i=1,2$, are continuous, we have 
\begin{align*}
c^2+K^1_T-K^1_{t_2}= c^1+{K}^2_T-{K}^2_{t_2}, \ c^2+K^2_T-K^2_t> c^1+{K}^1_T-{K}^1_t, \ t\in [t_1,t_2),
\end{align*}
which together with Eq. \eqref{equation25} yields that 
\begin{align*}
Y^2_t-{Y}^1_t=c^2+(K^2_T-K^2_t)-c^1-(K^1_T-K^1_t)>0, \ t\in[t_1,t_2).
\end{align*}
By a similar analysis as in the proof of Lemma \ref{lemma1}, for any $t\in[t_1,t_2)$, we have
\begin{align*}
\mathcal{E}^2[l(t,Y^2_t)]>\mathcal{E}^2[l(t,Y^1_t)]\geq \mathcal{E}^1[l(t,{Y}^1_t)]\geq 0.
\end{align*}
It follows from the Skorokhod condition that $d K^2_t=0$, $t\in[t_1,t_2)$. All the above analysis indicates that 
\begin{align*}
c^1+K^1_T-K^1_{t_1}<c^2+K^2_T-K^2_{t_1}=c^2+K^2_T-K^2_{t_2}=c^1+K^1_T-K^1_{t_2}\leq c^1+ K^1_T-K^1_{t_1},
\end{align*}
which is a contradiction. The proof is complete.
\end{proof}

The Skorokhod condition usually ensures the minimality of the solution for the reflected BSDEs. For the mean reflected BSDE investigated in \cite{BEH}, due to the fact that the constraint is made in expectation but not pointwisely, the minimality property needs some additional assumption for the coefficient $f$. Mimicking the proof of Theorem 11 in \cite{BEH}, we can also obtain the following comparison theorem for BSDE with nonlinear expectation reflection. 
\begin{proposition}\label{prop3.9}
    Suppose that the coefficient $f$ satisfies Assumption \ref{assf} and takes the form of \eqref{equation24}. Let Assumptions \ref{ass2} and \ref{assE} holds. Given $\xi\in L^2(\mathcal{F}_T)$ with $\mathcal{E}[l(T,\xi)]\geq 0$.  Let $(Y,Z,K)$ be the solution to the nonlinear expectation reflected BSDE with parameters $(\xi,f,l,\mathcal{E})$ and let $(Y',Z',K')\in \mathcal{S}^2\times\mathcal{H}^2\times I[0,T]$ satisfy the following equation
   \begin{equation*}
\begin{cases}
Y'_t=\xi+\int_t^T f(s,Y'_s,Z'_s)ds-\int_t^T Z'_s dB_s+K'_T-K'_t, \\
\mathcal{E}[l(t,Y'_t)]\geq 0.
\end{cases}
\end{equation*}
Then, we have $Y_t\leq Y'_t$, $t\in[0,T]$.
\end{proposition}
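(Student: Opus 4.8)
The plan is to mimic the minimality argument for mean reflected BSDEs (Theorem 11 in \cite{BEH}), exploiting the special structure \eqref{equation24} of $f$ together with the fact that both obstacle processes are deterministic. First I would perform the exponential change of variables already used in the proof of Proposition \ref{prop11}: setting $A_t=\int_0^t a_s\,ds$ and passing to $\tilde Y_t=e^{A_t}Y_t$, $\tilde Z_t=e^{A_t}Z_t$, $\tilde K_t=\int_0^t e^{A_s}\,dK_s$, and the analogous primed quantities, both $(Y,Z,K)$ and $(Y',Z',K')$ are transformed into a solution, respectively a supersolution, of a reflected BSDE whose driver $\tilde f(t,z)=e^{A_t}h(t,e^{-A_t}z)$ no longer depends on $y$; the constraint $\mathcal{E}[l(t,Y'_t)]\ge 0$ is preserved since $\tilde l(t,\tilde Y'_t)=l(t,Y'_t)$. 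Because $e^{A_t}>0$ the transformation is monotone, so it suffices to prove $Y_t\le Y'_t$ in this reduced setting, and I would therefore assume from now on that $f=f(t,z)$.

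Next I would observe that both $K$ and $K'$ are \emph{deterministic}: the process $K$ built from the operator $L_t$ takes values in $[0,\infty)$ (deterministic reals, since $\mathcal{E}$ returns a number), and $K'\in I[0,T]$ is by definition a deterministic continuous nondecreasing function starting at the origin. Consequently the detrended processes $\bar Y_t:=Y_t+K_t-K_T$ and $\bar Y'_t:=Y'_t+K'_t-K'_T$ each solve the \emph{same} standard BSDE with terminal value $\xi$ and driver $f(s,z)$. By uniqueness of BSDE solutions this forces $\bar Y\equiv\bar Y'$ and $Z\equiv Z'$, whence
$$Y_t-Y'_t=(K_T-K_t)-(K'_T-K'_t),\qquad t\in[0,T].$$
The problem is thereby reduced to the purely deterministic inequality $K_T-K_t\le K'_T-K'_t$ for all $t$.

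I would prove this by contradiction. If it fails, there is $t_1<T$ with $K_T-K_{t_1}>K'_T-K'_{t_1}$; define $t_2=\inf\{t\ge t_1:K_T-K_t\le K'_T-K'_t\}$, which satisfies $t_2\le T$ because equality holds at $t=T$. By continuity of $K,K'$, on $[t_1,t_2)$ the difference $Y_t-Y'_t=(K_T-K_t)-(K'_T-K'_t)$ is a strictly positive deterministic constant for each fixed $t$. The crucial step is the strict comparison property of the nonlinear expectation from Remark \ref{remarkE}(ii): since $l(t,\cdot)$ is increasing with $l(t,Y_t)-l(t,Y'_t)\ge c_l(Y_t-Y'_t)>0$, we obtain $\mathcal{E}[l(t,Y_t)]>\mathcal{E}[l(t,Y'_t)]\ge 0$ on $[t_1,t_2)$. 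Because only $(Y,Z,K)$ carries the Skorokhod condition $\int_0^T\mathcal{E}[l(t,Y_t)]\,dK_t=0$, the strict positivity just obtained forces $dK_t=0$ on $[t_1,t_2)$, so $K$ is constant there and $K_T-K_{t_1}=K_T-K_{t_2}$. Combining this with the equality $K_T-K_{t_2}=K'_T-K'_{t_2}$ at $t_2$ and the monotonicity of $K'$ yields $K_T-K_{t_1}\le K'_T-K'_{t_1}$, contradicting the choice of $t_1$.

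The main obstacle is conceptual rather than computational: since $(Y',Z',K')$ is only a supersolution and no Skorokhod condition is imposed on it, the argument cannot be symmetrized, and the whole proof hinges on converting the strict positivity of $\mathcal{E}[l(t,Y_t)]$ on the bad interval into the vanishing of $dK$ via the Skorokhod condition of the genuine solution. The strict comparison property of $\mathcal{E}$ in Remark \ref{remarkE}(ii) — which replaces the strict monotonicity of the classical expectation exploited in \cite{BEH} — is exactly what legitimizes this transfer despite the nonlinearity of $\mathcal{E}$; the deterministic nature of $K$ and $K'$ is what makes the reduction via BSDE uniqueness available.
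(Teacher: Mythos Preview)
Your proposal is correct and follows essentially the same route that the paper indicates (it omits the proof, pointing to Theorem~11 in \cite{BEH}): reduce to $f=f(t,z)$ via the exponential change of variables from Proposition~\ref{prop11}, use BSDE uniqueness to obtain $Y_t-Y'_t=(K_T-K_t)-(K'_T-K'_t)$, and run the contradiction argument with the Skorokhod condition, invoking the strict comparison in Remark~\ref{remarkE}(ii) in place of strict monotonicity of the linear expectation. One small point worth making explicit: at $t_2$ you use equality $K_T-K_{t_2}=K'_T-K'_{t_2}$; this follows by combining the left-limit $\ge$ (from strict inequality on $[t_1,t_2)$) with the right-limit $\le$ (from the definition of $t_2$ and continuity), and deserves a one-line justification.
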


\begin{remark}\label{remarkminimal}
   It should be pointing out that we cannot derive the minimality of the solution to the BSDE with  nonlinear expectation reflection if $K'$ is not assumed to be determinisitc in Proposition \ref{prop3.9}. For readers' convenience, we provide a counterexample below, which can also be found in \cite{BEH}. Suppose that $\mathcal{E}$ degenerates into the classical expectation, $l(t,x)=x-u$ with $u$ being a constant and the driver is a positive constant, denoted by $\gamma$. Given $\xi\in L^2(\mathcal{F}_T)$ with $u<\E[\xi]<u+\gamma T$, let $t^*$ be such that $\E[\xi]-\gamma(T-t^*)=u$. We may obtain that the first component of the solution to the mean reflected BSDE with parameters $(\xi,f,l)$ is as follows
   \begin{align*}
       Y_t=\E[\xi|\mathcal{F}_t]-\gamma(T-t)+K_T-K_t,
   \end{align*}
   where $K_t=\gamma(t\wedge t^*)$. For any constant $\alpha$, set 
   \begin{align*}
       M^\alpha_t=\exp\left(\alpha B_t-\frac{1}{2}\alpha^2 t\right), \ K^\alpha_t=\int_0^t M^\alpha_sdK_s.
   \end{align*}
   Let $(Y^\alpha,Z^\alpha)$ be the solution to the following BSDE
   \begin{align*}
       Y^\alpha_t=\xi-\int_t^T\gamma dt-\int_t^T Z^\alpha_sdB_s+K^\alpha_T-K^\alpha_t.
   \end{align*}
   It is easy to check that
   \begin{align*}
       Y^\alpha_t=\E[\xi|\mathcal{F}_t]-\gamma(T-t)+M^\alpha_t(K_T-K_t),
   \end{align*}
   and $Y^\alpha$ satisfies the constraint, i.e., $\E[Y^\alpha_t]=\E[Y_t]\geq u$. However, we do not have $Y_t\leq Y^\alpha_t$.
\end{remark}


In the following, we investigate some weaker comparison properties for BSDE with nonlinear expectation reflections using the representation for its solution. Actually, the expectation of the solution to a BSDE with double mean reflections corresponds to a certain optimization problem (see \cite{FS,Li2024}). We then establish the similar result for the case of nonlinear expectation reflection. More precisely, let $(Y,Z,K)$ be the solution to the BSDE with nonlinear expectation reflection \eqref{nonlinearyz}. We define
\begin{align*}
\bar{Y}_t:=\E_t\left[\xi+\int_t^T f(s,Y_s,Z_s)ds\right].
\end{align*}
It is easy to check that $Y_t=\bar{Y}_t-\E[\bar{Y}_t]+\E[Y_t]$, $t\in[0,T]$. 
Consider the following equation
\begin{align*}
    \mathcal{E}[l(t,\bar{Y}_t-\E[\bar{Y}_t]+x)]=0.
\end{align*}
By Lemma \ref{lemma1}, the above equation admits a unique solution, which is denoted by $\bar{l}_t$. 

\begin{theorem}\label{theorem3.6}
Suppose that $(Y,Z,K)$ is the solution to the BSDE with nonlinear expectation reflection \eqref{nonlinearyz}. Then, for  any $t\in[0,T]$, we have
\begin{align*}
\E[Y_t]=\sup_{s\in[t,T]}\left\{\E\left[\int_t^{s} f(u,Y_u,Z_u)du+\xi I_{\{s=T\}}\right]+\bar{l}_s I_{\{s<T\}}\right\}.
\end{align*}
\end{theorem}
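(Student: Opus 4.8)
The plan is to reduce the asserted identity to the explicit representation \eqref{representationK} of the (deterministic) process $K$ obtained in Proposition \ref{prop7}, and then to rewrite that representation in the form claimed. First I would record the two structural facts that make $\E[Y_t]$ computable. Since $K$ is a deterministic function, taking $\E_t$ on both sides of \eqref{nonlinearyz} and using $\E_t\left[\int_t^T Z_s\,dB_s\right]=0$ gives $Y_t=\bar{Y}_t+(K_T-K_t)$, whence $\E[Y_t]=\E[\bar{Y}_t]+(K_T-K_t)$. Next, freezing the driver as $C_s:=f(s,Y_s,Z_s)$, which lies in $\mathcal{H}^2$ by Assumption \ref{assf}, the triple $(Y,Z,K)$ solves the frozen problem \eqref{nonlinearnoyz} with this $C$; by the uniqueness part of Proposition \ref{prop7} its $K$ must agree with the explicitly constructed one, so that \eqref{representationK} applies with $X_s=\bar{Y}_s$ and yields
\begin{align*}
K_T-K_t=\sup_{s\in[t,T]}L_s(\bar{Y}_s),\qquad t\in[0,T].
\end{align*}

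The central step is to identify $L_s(\bar{Y}_s)$ with $(\bar{l}_s-\E[\bar{Y}_s])^+$. For fixed $s$, set $h_s(x):=\mathcal{E}[l(s,\bar{Y}_s+x)]$; by Lemma \ref{lemma1} this is strictly increasing in $x$ with a unique zero, and by the defining relation $\mathcal{E}[l(s,\bar{Y}_s-\E[\bar{Y}_s]+\bar{l}_s)]=0$ that zero is exactly $\bar{l}_s-\E[\bar{Y}_s]$. Consequently $h_s(x)\geq 0\iff x\geq \bar{l}_s-\E[\bar{Y}_s]$, and since $L_s(\bar{Y}_s)=\inf\{x\geq 0:h_s(x)\geq 0\}$ by \eqref{operator}, taking the intersection with $[0,\infty)$ gives $L_s(\bar{Y}_s)=(\bar{l}_s-\E[\bar{Y}_s])^+$. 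Continuity of $s\mapsto L_s(\bar{Y}_s)$ from Proposition \ref{continuity} guarantees the supremum is attained and the expression is well posed. Combining with the first step,
\begin{align*}
\E[Y_t]=\E[\bar{Y}_t]+\sup_{s\in[t,T]}\bigl(\bar{l}_s-\E[\bar{Y}_s]\bigr)^+.
\end{align*}

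It remains to rewrite this in the stated form. I would use $\E[\bar{Y}_t]-\E[\bar{Y}_s]=\E\left[\int_t^s f(u,Y_u,Z_u)\,du\right]$ and $\E[\bar{Y}_T]=\E[\xi]$ to see that, for $s<T$, the bracketed term in the theorem equals $\E[\bar{Y}_t]+(\bar{l}_s-\E[\bar{Y}_s])$, while for $s=T$ it equals $\E[\bar{Y}_t]$. Thus the right-hand side of the theorem is $\E[\bar{Y}_t]+\max\bigl(\sup_{s\in[t,T)}(\bar{l}_s-\E[\bar{Y}_s]),\,0\bigr)$, which coincides with $\E[\bar{Y}_t]+\sup_{s\in[t,T]}(\bar{l}_s-\E[\bar{Y}_s])^+$. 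The one point needing care — and the main obstacle — is the bookkeeping at the endpoint $s=T$: one must check that the $s=T$ term (of value $\E[\bar{Y}_t]$) plays precisely the role of the ``no-push'' alternative $0$ inside the positive part, which amounts to verifying $\bar{l}_T\leq\E[\xi]$. This follows from the standing terminal admissibility $\mathcal{E}[l(T,\xi)]\geq 0$ together with the strict monotonicity of $h_T$, so that $(\bar{l}_T-\E[\bar{Y}_T])^+=0$ and the $s=T$ contribution to the supremum is harmless. This identification completes the proof.
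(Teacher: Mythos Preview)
Your argument is correct, and it takes a genuinely different route from the paper. The paper proves the identity as two inequalities: first it shows $\E[Y_t]\geq\cdots$ for every $s\in[t,T]$ by combining the pointwise bound $\E[Y_r]\geq\bar{l}_r$ with the BSDE dynamics, and then it exhibits the optimal time $s^*=\inf\{s\geq t:\E[Y_s]\leq\bar{l}_s\}\wedge T$, using the Skorokhod condition to show $K$ is flat on $[t,s^*)$ and hence that the supremum is attained. Your approach instead freezes the driver, invokes the explicit formula \eqref{representationK} for $K_T-K_t$ via the uniqueness part of Proposition~\ref{prop7}, identifies $L_s(\bar{Y}_s)=(\bar{l}_s-\E[\bar{Y}_s])^+$ from Lemma~\ref{lemma1}, and finishes by algebra. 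Your route is shorter and more constructive, while the paper's argument is self-contained (it does not call back to the representation of $K$) and, as a by-product, identifies the maximizing $s^*$ explicitly. Your handling of the endpoint $s=T$ via $\mathcal{E}[l(T,\xi)]\geq0\Rightarrow\bar{l}_T\leq\E[\xi]$ is the right way to reconcile the two forms.
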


\begin{proof}
The result clearly holds for $t=T$. In the following, we only consider the case that $t<T$. First, we show that for any $0\leq t\leq s\leq T$, 
\begin{align*}
    \E[Y_t]\geq \E\left[\int_t^{s} f(u,Y_u,Z_u)du+\xi I_{\{s=T\}}\right]+\bar{l}_s I_{\{s<T\}}.
\end{align*}
Since $K_T-K_t\geq 0$ for any $t\in[0,T]$, the above inequality clearly holds for $s=T$. For the case that $0\leq t\leq s<T$, we first claim that $\E[Y_r]\geq \bar{l}_r$, $r\in[0,T]$. In fact, it is a direct consequence of the following inequality
\begin{align*}
    \mathcal{E}\left[l\left(r,\bar{Y}_r-\E[\bar{Y}_r]+\bar{l}_r\right)\right]=0\leq \mathcal{E}[l(r,Y_r)]=\mathcal{E}\left[l\left(r,\bar{Y}_r-\E[\bar{Y}_r]+\E[Y_r]\right)\right].
\end{align*}
Note that for $0\leq t\leq s<T$,
\begin{align*}
    Y_t=Y_s+\int_t^{s} f(u,Y_u,Z_u)du-\int_t^{s} Z_udB_u+K_s-K_t.
\end{align*}
Taking expectations on both sides yields that
\begin{align*}
    \E[Y_t]=\E\left[Y_s+\int_t^{s} f(u,Y_u,Z_u)du\right]+K_s-K_t\geq \E\left[\int_t^{s} f(u,Y_u,Z_u)du\right]+\bar{l}_s.
\end{align*}
All the above analysis indicates that 
\begin{align*}
\E[Y_t]\geq \sup_{s\in[t,T]}\left\{\E\left[\int_t^{s} f(u,Y_u,Z_u)du+\xi I_{\{s=T\}}\right]+\bar{l}_s I_{\{s<T\}}\right\}.
\end{align*}

We define
\begin{align*}
    s^*:=\inf\{s\geq t: \E[Y_s]\leq \bar{l}_s\}\wedge T.
\end{align*}
It remains to prove that for $t\in[0,T)$, 
\begin{align*}
   \E[Y_t]=\E\left[\int_t^{s^*} f(u,Y_u,Z_u)du+\xi I_{\{s^*=T\}}\right]+\bar{l}_{s^*} I_{\{s^*<T\}}. 
\end{align*}

\textbf{Case 1. $s^*=t$.} In this case, we have $\E[Y_t]=\bar{l}_t=\bar{l}_{s^*}$.

\textbf{Case 2. $t<s^*\leq T$.} Then, for any $r\in[t,s^*)$, we have $\E[Y_r]>\bar{l}_r$. It is easy to check that 
\begin{align*}
    \mathcal{E}[l(r,Y_r)]=\mathcal{E}\left[l\left(r,\bar{Y}_r-\E[\bar{Y}_r]+\E[Y_r]\right)\right]>\mathcal{E}\left[l\left(r,\bar{Y}_r-\E[\bar{Y}_r]+\bar{l}_r\right)\right]=0,
\end{align*}
which together with the Skorokhod condition indicates that for $r\in[t,s^*)$, $dK_r=0$. It follows from the continuity of $K$ that $K_t=K_{s^*}$. Therefore, we have 
\begin{align*}
    Y_t=Y_{s^*}+\int_t^{s^*} f(u,Y_u,Z_u)du-\int_t^{s^*}Z_udB_u.
\end{align*}
Taking expectations on both sides yields the desired result. The proof is complete.
\end{proof}

Now, we are ready to establish the comparison properties for the expectation of the solutions to the BSDEs with nonlinear expectation reflections using Theorem \ref{theorem3.6}.

\begin{corollary}\label{cor3.2'}
   Suppose that $l^i$ are deterministic functions satisfying Assumption \ref{ass2}, $i=1,2$. Let $(Y^i,Z^i,K^i)$ be the solution to the following mean reflected BSDEs
    \begin{displaymath}
\begin{cases}
Y^i_t=\xi^i+\int_t^T (a_sY^i_s+f^i_s) ds-\int_t^T Z^i_s dB_s+K^i_T-K^i_t, \\
\E[l^i(t,Y^i_t)]\geq 0, \\
K^i\in I[0,T] \textrm{ and }\int_0^T \E[l(t,Y^i_t)]dK^i_t=0,
\end{cases}
\end{displaymath}
where $\{a_t\}_{t\in[0,T]}$ is a deterministic and bounded measurable function,  $f^i\in \mathcal{H}^2$ and $\xi^i\in L^2(\mathcal{F}_T)$ satisfy $\E[l^i(T,\xi^i)]\geq 0$, $i=1,2$. Suppose that 
\begin{itemize}
    \item $\E[\xi^1]\geq \E[\xi^2]$, $\E[f^1_t]\geq \E[f^2_t]$, for any $t\in[0,T]$.
    \item For any $t\in[0,T]$, $l^1(t,\cdot)$ is concave and $l^2(t,\cdot)$ is convex. 
    \item For any $(t,x)\in[0,T]\times\mathbb{R}$, $l^1(t,x)\leq l^2(t,x)$. 
\end{itemize}Then, we have $\E[Y^1_t]\geq \E[Y^2_t]$, for any $t\in[0,T]$. 
\end{corollary}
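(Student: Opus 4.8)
The plan is to reproduce the representation‑based argument: first strip the linear term $a_sY$ by an exponential change of variables, then apply Theorem \ref{theorem3.6} to both solutions and compare the two variational representations term by term. The only genuinely new ingredient is the comparison of the threshold processes $\bar l^i$, which is where the concavity/convexity hypotheses enter.

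First I would remove the $a_sY$ term exactly as in the proof of Proposition \ref{prop11}. With $A_t=\int_0^t a_s\,ds$, set $\tilde Y^i=e^{A}Y^i$, $\tilde Z^i=e^{A}Z^i$ and $\tilde K^i=\int_0^\cdot e^{A_s}\,dK^i_s$; then $(\tilde Y^i,\tilde Z^i,\tilde K^i)$ solves the mean reflected BSDE with data $\tilde\xi^i=e^{A_T}\xi^i$, driver $\tilde f^i_t=e^{A_t}f^i_t$ (independent of $y$), and loss function $\tilde l^i(t,y)=l^i(t,e^{-A_t}y)$. Since $A$ is deterministic and $e^{A}>0$, all hypotheses are preserved: $\E[\tilde\xi^1]\ge\E[\tilde\xi^2]$, $\E[\tilde f^1_t]\ge\E[\tilde f^2_t]$, $\E[\tilde l^i(T,\tilde\xi^i)]=\E[l^i(T,\xi^i)]\ge0$, and the affine reparametrization $y\mapsto e^{-A_t}y$ keeps $\tilde l^1(t,\cdot)$ concave, $\tilde l^2(t,\cdot)$ convex and $\tilde l^1\le\tilde l^2$. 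As $\E[\tilde Y^i_t]=e^{A_t}\E[Y^i_t]$, it suffices to treat $a\equiv0$.

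Next I apply Theorem \ref{theorem3.6} to each solution, obtaining
\[
\E[Y^i_t]=\sup_{s\in[t,T]}\left\{\E\Big[\int_t^s f^i_u\,du+\xi^i I_{\{s=T\}}\Big]+\bar l^i_s I_{\{s<T\}}\right\},
\]
where $\bar l^i_s$ is the unique root of $x\mapsto\E[l^i(s,X^i_s+x)]$ with $X^i_s:=\bar Y^i_s-\E[\bar Y^i_s]$, so that $\E[X^i_s]=0$. The crux is $\bar l^1_s\ge\bar l^2_s$. Introduce the deterministic root $x^i_s$ of the strictly increasing map $l^i(s,\cdot)$, i.e. $l^i(s,x^i_s)=0$, whose existence and uniqueness follow from Assumption \ref{ass2}(3). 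Since $l^1(s,\cdot)$ is concave and $X^1_s$ is centered, Jensen gives $\E[l^1(s,X^1_s+x)]\le l^1(s,x)$; evaluating at $x=\bar l^1_s$ yields $l^1(s,\bar l^1_s)\ge0=l^1(s,x^1_s)$, hence $\bar l^1_s\ge x^1_s$. Dually, convexity of $l^2(s,\cdot)$ and $\E[X^2_s]=0$ give $\E[l^2(s,X^2_s+x)]\ge l^2(s,x)$, so $l^2(s,\bar l^2_s)\le0=l^2(s,x^2_s)$ and $\bar l^2_s\le x^2_s$. Finally $l^1\le l^2$ forces $x^2_s\le x^1_s$, since $l^2(s,x^1_s)\ge l^1(s,x^1_s)=0=l^2(s,x^2_s)$. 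Chaining these, $\bar l^2_s\le x^2_s\le x^1_s\le\bar l^1_s$.

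With $\bar l^1_s\ge\bar l^2_s$ in hand I conclude by comparing the representations term by term: for each $s$, Fubini and $\E[f^1_u]\ge\E[f^2_u]$ give $\E[\int_t^s f^1_u\,du]\ge\E[\int_t^s f^2_u\,du]$, while $\E[\xi^1]\ge\E[\xi^2]$ handles the $s=T$ term; thus every bracketed quantity for $i=1$ dominates its counterpart for $i=2$, and taking suprema gives $\E[Y^1_t]\ge\E[Y^2_t]$. The main obstacle is precisely the threshold comparison: because $\bar Y^1$ and $\bar Y^2$ have different centered parts $X^1_s,X^2_s$, one cannot compare the roots of $x\mapsto\E[l^i(s,X^i_s+x)]$ directly. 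The concavity/convexity hypotheses are exactly what let one sandwich each $\bar l^i_s$ against the \emph{deterministic} root $x^i_s$ of $l^i(s,\cdot)$ via Jensen, thereby decoupling the comparison from the distinct laws of $X^i_s$.
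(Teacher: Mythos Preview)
Your proof is correct and follows essentially the same route as the paper: reduce to $a\equiv 0$ by the exponential change of variables, invoke Theorem \ref{theorem3.6}, and compare the thresholds $\bar l^i_s$ by sandwiching them against the deterministic roots of $l^i(s,\cdot)$ via Jensen (your $x^i_s$ is the paper's $(l^i)^{-1}(s,0)$). Your write-up is in fact more explicit than the paper's on why the hypotheses survive the change of variables and on the final term-by-term comparison using Fubini, but the argument is the same.
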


\begin{proof}
    Similar as the proof of Proposition \ref{prop11}, we only consider the case that $a\equiv 0$. Set 
    \begin{align*}
\bar{Y}^i_t=\E_t\left[\xi^i+\int_t^T f^i_s ds\right].
\end{align*}
    By Theorem \ref{theorem3.6}, we have 
    \begin{align*}
\E[Y^i_t]=\sup_{s\in[t,T]}\left\{\E\left[\int_t^{s} f^i_u du+\xi^i I_{\{s=T\}}\right]+\bar{l}^i_s I_{\{s<T\}}\right\},
\end{align*}
where for any $t\in[0,T]$, $\bar{l}^i_t$ is the solution to the following equation
\begin{align*}
    \E[l^i(t,\bar{Y}^i_t-\E[\bar{Y}^i_t]+x)]=0.
\end{align*}
It remains to prove that for any $t\in[0,T]$, $\bar{l}^1_t\geq \bar{l}^2_t$. Let $(l^i)^{-1}(t,\cdot)$ be the inverse map of $l^i(t,\cdot)$.  Since $l^1(t,\cdot)$ is concave, we have
\begin{align*}
    l^1(t,\bar{l}^1_t)=l^1(t,\E[\bar{Y}^1_t]-\E[\bar{Y}^1_t]+\bar{l}^1_t)\geq \E[l^1(t,\bar{Y}^1_t-\E[\bar{Y}^1_t]+\bar{l}^1_t)]=0,
\end{align*}
which indicates that $\bar{l}^1_t\geq (l^1)^{-1}(t,0)$. Similarly, we have $\bar{l}^2_t\leq (l^2)^{-1}(t,0)$. It is easy to check that 
\begin{align*}
   (l^1)^{-1}(t,0) \geq (l^2)^{-1}(t,0).
\end{align*}
The proof is complete.
\end{proof}

\begin{remark}
 (i) Compared with Theorem 3.2 case 2 in \cite{HHLLW}, in Proposition \ref{prop11}, our constraint is made by a nonlinear expectation and our loss functions may be different. Besides, Theorem 3.2 in \cite{HHLLW} needs the assumption that $C\leq 1$, where $C$ is the Lipschitz constant for the following equation
\begin{align*}
            |L_t(X)-L_t(Y)|\leq C\E[|X-Y|], \ t\in[0,T].
        \end{align*}
When the loss function $l$ is bi-Lipschitz continuous, this fact amounts to say that it is linear. In our situation, the requirement for the continuity of the operator $L_t(\cdot)$ is weaker (see Proposition \ref{lip}) and our loss functions can be nonlinear. It is worth pointing out that although our assumptions are weaker, the result in Proposition \ref{prop11} is stronger, i.e., we may obtain the comparison property for the pointwise value of $Y$ but not restricted to its expectation.

\noindent (ii) Corollary \ref{cor3.2'} extends Theorem 3.2 case 1 in \cite{HHLLW}. In fact, we remark again that under the assumption $C\leq 1$ in \cite{HHLLW}, the loss function almost needs to be linear. However, in Corollary \ref{cor3.2'}, the loss functions may be nonlinear. Besides, we do not need to assume that $l^1\equiv l^2$ as in Theorem 3.2 in \cite{HHLLW}. 
\end{remark}

\section{Application to superhedging under risk constraint}


Recall that in \cite{BEH}, the BSDE with mean reflection can be applied to the superhedging problem under running risk management constraint. More precisely, the first component of the solution can be seen as the value of a portfolio. From a financial point of view, it is reasonable to require that at each time $t$, the value of the portfolio remains acceptable. As shown in \cite{ADEH,Delbaen,FS02}, the acceptance set can be characterized by the so-called risk measures, whose definition is given below. 

\begin{definition}
    A functional $\rho:L^2(\mathcal{F}_T)\rightarrow\mathbb{R}$ is called a risk measure if it satisfies $\rho(0)=0$ and 
    \begin{itemize}
    \item[(a)] Monotonicity: $X\geq Y\Rightarrow \rho(X)\leq \rho(Y)$;
    \item[(b)] Translation invariance: $\rho(X+a)=\rho(X)-a$, for any $a\in\mathbb{R}$.
\end{itemize}
    $\rho$ is called a coherent risk measure if it satisfies   (a), (b) and 
    \begin{itemize}
    \item[(c)] Sub-additivity: $\rho(X+Y)\leq \rho(X)+\rho(Y)$;
    \item[(d)] Positive homogeneity: $\rho(aX)=a\rho(X)$, for any $a\geq 0$.
\end{itemize}
    $\rho$ is called a convex risk measure if it satisfies (a)-(b) and
    \begin{itemize}
        \item[(e)] Convexity: $\rho(\lambda X+(1-\lambda)Y)\leq \lambda \rho(X)+(1-\lambda)Y$, for any $\lambda\in[0,1]$. 
    \end{itemize}
\end{definition}

\begin{remark}
    By the results in \cite{Delbaen,FS02,FG}, if $\rho$ is a coherent risk measure, there exists a set $\mathcal{P}$ of $\P$-absolutely continuous probability measures such that 
    \begin{align}\label{coherent}
        \rho(X)=\sup_{\Q\in \mathcal{P}}\E^{\Q}[-X].
    \end{align}
    If $\rho$ a convex risk measure, there exists a convex set of probability measures $\mathcal{P}$ and a convex functional $F:\mathcal{P}\rightarrow \mathbb{R}\cup \{+\infty\}$ such that 
    \begin{align}\label{convex}
        \rho(X)=\sup_{\Q\in \mathcal{P}}\left\{\E^{\Q}[-X]-F(\Q)\right\}.
    \end{align}
\end{remark}

Now, we are given a set of risk measures $\{\rho(t,\cdot)\}_{t\in[0,T]}$ and a time indexed deterministic benchmark $\{q_t\}_{t\in[0,T]}$. Consider a financial market consists of a riskless bond with price $S^0$ evolving according to the following equation
\begin{align*}
    dS^0_t=r S^0_t dt,
\end{align*}
where $r>0$ is the interest rate, and a stock $S$ whose dynamics is given by the following stochastic differential equation 
\begin{align*}
    dS_t=S_t(\mu dt+\sigma dB_t),
\end{align*}
where $\mu$ is the appreciation rate and $\sigma$ represents the volatility. The agent with given initial wealth $w$ will invest in the financial market, who chooses a portfolio $\pi_t$ and a consumption plan $C_t$ at time $t$, where $\pi_t$ is the proportion of the wealth $V_t$ invested in the stock and $C_t$ represents the cumulative amount of consumption made before time $t$. Adapted to the framework of this paper, the consumption plan is restricted to be a deterministic function. Then, the wealth process associated to the consumption-investment strategy $(\pi,C)$ can be characterized by the following equation
\begin{align*}
    dV_t=r V_tdt+(\mu-r)\pi_t V_t dt+\sigma \pi_t V_t dB_t-dC_t,  \ V_0=w.
\end{align*}
In order to make sure the wealth is admissible at each time $t$, we propose the following constraint
\begin{align*}
    \rho(t,V_t)\leq q_t, \ t\in[0,T].
\end{align*}
Now, given a contingent claim $\xi\in L^2(\mathcal{F}_T)$, the superhedging price $p$ is defined by $p=\inf_{w\in \mathcal{U}}w$, where 
\begin{align*}
    \mathcal{U}=\left\{w\in\mathbb{R}:\exists (\pi,C) \textrm{ such that } V_T\geq \xi \textrm{ and } \rho(t,V_t)\leq q_t, t\in[0,T]\right\}.
\end{align*}
A natural candidate of the superhedging price is $Y_0$, where $Y$ is the first component of the solution to the following reflected BSDE:
\begin{equation*}
\begin{cases}
Y_t=\xi-\int_t^T(rY_s+\frac{\mu-r}{\sigma}Z_s) ds-\int_t^T Z_s dB_s+K_T-K_t, \\
q_t-\rho(t,Y_t)\geq 0, \\
K\in I[0,T] \textrm{ and }\int_0^T (q_t-\rho(t,Y_t))dK_t=0.
\end{cases}
\end{equation*}

To summarize, the objective of this section is to investigate the following type of reflected BSDE:
\begin{equation}\label{BSDEwithriskmeasure}
\begin{cases}
Y_t=\xi+\int_t^T f(s,Y_s,Z_s)ds-\int_t^T Z_s dB_s+K_T-K_t, \\
q_t-\rho(t,Y_t)\geq 0, \\
K\in I[0,T] \textrm{ and }\int_0^T (q_t-\rho(t,Y_t))dK_t=0,
\end{cases}
\end{equation}
which is called the BSDE with risk measure reflection as in \cite{BEH}. In order to construct the solution to the above equation, similar as the analysis in the previous section, we need to introduce the following operator:
\begin{align}\label{operator'}
\tilde{L}_t: L^2(\mathcal{F}_T)\rightarrow [0,\infty), \ X\mapsto \inf\{x\geq 0:q_t-\rho(t,x+X)\geq 0\}.
\end{align}
The operator can be represented explicitly. In fact, since $\rho(t,\cdot)$ is a risk measure, using the translation invariance property, we have
\begin{align*}
    q_t-\rho(t,x+X)=q_t+x-\rho(t,X).
\end{align*}
Consequently, we obtain that  
\begin{align*}
    \tilde{L}_t(X)=(\rho(t,X)-q_t)^+.
\end{align*}
We make the following assumption for the risk measures $\{\rho(t,\cdot)\}_{t\in[0,T]}$. 

\begin{assumption}\label{assrho}
    For any $X\in L^2(\mathcal{F}_T)$, $\{\rho(t,X)\}_{t\in [0,T]}$ is continuous. For any  $t\in[0,T]$, there exists a positive constant $M$, such that 
    \begin{align*}
        \rho(t,X)-\rho(t,Y)\leq M G_{0,T}^{\kappa}[Y-X], \ X,Y\in L^2(\mathcal{F}_T).
    \end{align*}
\end{assumption}

\begin{remark}
    Similar as Remark \ref{remarkE}, if $\rho$ satisfies Assumption \ref{assrho}, we have
     \begin{align*}
        |\rho(t,X)-\rho(t,Y)|\leq M G_{0,T}^{\kappa}[|X-Y|], \ X,Y\in L^2(\mathcal{F}_T).
    \end{align*}
\end{remark}

\begin{theorem}\label{thm12}
    Let $\{\rho(t,\cdot)\}_{t\in[0,T]}$ be a collection of risk measures satisfying Assumption \ref{assrho}. Suppose that $\{q_t\}_{t\in[0,T]}$ is a continuous function and $f$ satisfies Assumption \ref{assf}. Given $\xi\in L^2(\mathcal{F}_T)$ with $\rho(T,\xi)\leq q_T$, the reflected BSDE \eqref{BSDEwithriskmeasure} admits a unique solution. 
\end{theorem}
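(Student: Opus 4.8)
The plan is to recognize that Theorem \ref{thm12} is the exact risk-measure counterpart of Theorem \ref{main}, with the reflection operator $L_t$ replaced by the operator $\tilde{L}_t(X)=(\rho(t,X)-q_t)^+$ computed above. Consequently, the entire machinery of Section 3 transfers once I verify that $\tilde{L}_t$ enjoys the two structural properties on which that machinery rests: the $G^\kappa_{0,T}$-Lipschitz estimate of Proposition \ref{lip} and the time-continuity of Proposition \ref{continuity}. Note that the well-definedness issue that required Lemma \ref{lemma1} in the nonlinear-expectation setting is now free, since $\tilde{L}_t$ is given in closed form and manifestly maps $L^2(\mathcal{F}_T)$ into $[0,\infty)$.

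First I would record the Lipschitz bound. Using the non-expansiveness of $x\mapsto x^+$ together with the domination in Assumption \ref{assrho}, for any $X,Y\in L^2(\mathcal{F}_T)$ one has
\begin{align*}
|\tilde{L}_t(X)-\tilde{L}_t(Y)|\leq |\rho(t,X)-\rho(t,Y)|\leq M G^\kappa_{0,T}[|X-Y|],
\end{align*}
which is precisely the analog of Proposition \ref{lip}, now with constant $M$ in place of $\frac{C_l}{c_l}e^{\kappa T}$. Next I would prove that $t\mapsto\tilde{L}_t(S_t)$ is continuous for $S\in\mathcal{S}^2$. Splitting $\rho(s,S_s)-\rho(t,S_t)=[\rho(s,S_s)-\rho(s,S_t)]+[\rho(s,S_t)-\rho(t,S_t)]$, the first bracket is bounded by $M G^\kappa_{0,T}[|S_s-S_t|]$, which tends to $0$ as $s\to t$ by Proposition \ref{g-exp}(2) and dominated convergence (using $S\in\mathcal{S}^2$), while the second tends to $0$ by the assumed continuity of $\{\rho(\cdot,S_t)\}$ at the fixed argument $S_t$; combined with the continuity of $\{q_t\}$ and of $x\mapsto x^+$ this gives $\tilde{L}_s(S_s)\to\tilde{L}_t(S_t)$.

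With these two properties in hand, the driver-free case (the analog of Proposition \ref{prop7}) follows verbatim: for existence one sets $X_t=\E_t[\xi+\int_t^T C_s\,ds]$ and $K_t=\sup_{s\in[0,T]}\tilde{L}_s(X_s)-\sup_{s\in[t,T]}\tilde{L}_s(X_s)$, which lies in $I[0,T]$ by the continuity just established and satisfies the Skorokhod condition; for uniqueness, the role played in Lemma \ref{lemma1} by the strict monotonicity of $l$ is now played by the translation invariance of $\rho$, namely if $K^1_T-K^1_t>K^2_T-K^2_t$ are constants then $q_t-\rho(t,X_t+(K^1_T-K^1_t))>q_t-\rho(t,X_t+(K^2_T-K^2_t))\geq 0$, yielding the same contradiction with the Skorokhod condition. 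The general Lipschitz driver is then treated by the contraction-mapping-plus-pasting scheme of Theorem \ref{main}, whose only reflection-specific ingredient is the Lipschitz estimate used to control $\sup_t|\hat{K}_T-\hat{K}_t|$ through the representation $K_T-K_t=\sup_{s\in[t,T]}\tilde{L}_s(X_s)$.

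The only genuinely new verifications are the two structural properties of $\tilde{L}_t$, and of these the time-continuity is the more delicate, since it requires combining the fixed-argument continuity of $\rho$ and $q$ with the $G^\kappa_{0,T}$-Lipschitz estimate to absorb the simultaneous variation of $t$ and $S_t$; everything downstream is an unchanged transcription of the Section 3 arguments. I would therefore expect the bulk of the write-up to consist of these short verifications, after which invoking the proof of Theorem \ref{main} closes the argument.
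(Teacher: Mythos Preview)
Your proposal is correct and follows essentially the same approach as the paper: the paper's proof also reduces everything to the machinery of Proposition \ref{prop7} and Theorem \ref{main}, and isolates the time-continuity of $t\mapsto\rho(t,S_t)$ (hence of $\tilde{L}_t(S_t)$) as the one new verification, proving it via the identical splitting $|\rho(t,S_t)-\rho(s,S_s)|\leq|\rho(t,S_t)-\rho(s,S_t)|+MG^\kappa_{0,T}[|S_t-S_s|]$ that you describe. Your write-up is in fact more explicit than the paper's, which omits mention of the Lipschitz bound on $\tilde{L}_t$ and the translation-invariance argument for uniqueness, but these are implicit in ``analogous to Proposition \ref{prop7} and Theorem \ref{main}.''
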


\begin{proof}
    The proof is analogous to those for Proposition \ref{prop7} and Theorem \ref{main}. We only need to show that for any $S\in \mathcal{S}^2$, the function $\{\rho(t,S_t)\}_{t\in[0,T]}$ is continuous. In fact, for any $s,t\in[0,T]$, it is easy to check that 
    \begin{align*}
        |\rho(t,S_t)-\rho(s,S_s)|&\leq |\rho(t,S_t)-\rho(s,S_t)|+|\rho(s,S_t)-\rho(s,S_s)|\\
        &\leq |\rho(t,S_t)-\rho(s,S_t)|+M G_{0,T}^{\kappa}[|S_t-S_s|]\\
        &\leq |\rho(t,S_t)-\rho(s,S_t)|+C(\E[|S_t-S_s|^2])^{1/2},
    \end{align*}
    where $C$ is a constant depending on $M,\kappa,T$. Then we get the desired result.
\end{proof}

\begin{remark}\label{remark4.5}
   In order to establish the well-posedness of the BSDE with risk measure reflection, in \cite{BEH}, the risk measure is required to satisfy the following Lipschitz assumption
    \begin{align*}
        |\rho(t,X)-\rho(t,Y)|\leq C\E[|X-Y|], \ t\in[0,T], \ X,Y\in L^2(\mathcal{F}_T).
    \end{align*}
    Consider the coherent risk measure (resp., convex risk measure) $\rho(t,\cdot)$ with representation \eqref{coherent} (resp., \eqref{convex}) associated with probability measures $\mathcal{P}$ (resp.,  with probability measures $\mathcal{P}$ and penalty function $F$). A sufficient condition to make sure the Lipschitz continuity is that the density $\frac{d\Q}{d\P}$ is bounded for any $\Q\in \mathcal{P}$. This condition is somewhat restrictive that excludes some important examples.  
    Let $\mathcal{P}$ be the same as in Example \ref{egofnonlinear expectation} (ii). Then, the coherent risk measure (resp., the convex risk measure) $\rho(t,\cdot)$ induced by $\mathcal{P}$ (resp., by $\mathcal{P}$ and $F$) may not be Lipschitz since the densities are not bounded. However, by Proposition 3.4 in \cite{EPQ}, we have 
    \begin{align*}
        \rho(t,X)-\rho(t,Y)\leq \sup_{\theta\in \Theta}\E^{\P^{\theta}}[Y-X]\leq \sup_{\theta\in \Theta_\kappa}\E^{\P^{\theta}}[Y-X]=\mathcal{E}^{\kappa}[Y-X]\leq G^\kappa_{0,T}[Y-X].
    \end{align*}
    Then, the coherent risk measure (resp., convex risk measure) $\rho(t,\cdot)$ satisfies our Assumption \ref{assrho}.
\end{remark}

\section*{Acknowledgments}
	This work was supported  by the National Natural Science Foundation of China (No. 12301178), the Natural Science Foundation of Shandong Province for Excellent Young Scientists Fund Program (Overseas) (No. 2023HWYQ-049), the Natural Science Foundation of Shandong Province (No. ZR2023ZD35) and  the Qilu Young Scholars Program of Shandong University. 

 \end{document}